\newtheorem{thm}{Theorem}[section] 
\newtheorem{cor}[thm]{Corollary}
\newtheorem{prop}[thm]{Proposition}
\newtheorem{conj}[thm]{Conjecture}
\newtheorem{lem}[thm]{Lemma}
\theoremstyle{definition} 
\newtheorem{defn}[thm]{Definition}
\newtheorem{thmdefn}[thm]{Theorem-Definition}
\theoremstyle{remark}
\newtheorem{rem}[thm]{Remark}
\newtheorem{claim}{Claim}
\numberwithin{equation}{section}
\newcommand{\Sym}{{\rm Sym}}
\newcommand{\univ}{{\rm univ}}
 \newcommand{\pdrv}[2]{\frac{\partial #1}{\partial #2}}
\newcommand{\R}{\mathbb{R}}
\newcommand{\N}{\mathbb{N}}
\newcommand{\C}{\mathbb{C}} 
\newcommand{\Q}{\mathbb{Q}}
\newcommand{\mP}{\mathbb{P}}
\newcommand{\mO}{\mathcal{O}}
\title[The structure of log smooth pairs in equality case of  BG-inequality]
{On the structure of a log smooth pair \\ in the equality case of \\ the Bogomolov-Gieseker inequality}
\author{Masataka IWAI}
\address{Department of Mathematics, Graduate School of Science, Osaka University,
1-1, Machikaneyama-cho, Toyonaka, Osaka 560-0043, Japan.}
\email{{\tt masataka@math.sci.osaka-u.ac.jp}}
\email{{\tt masataka.math@gmail.com}}
\date{\today, version 0.01}
\subjclass[2020]{Primary 32Q30, Secondary 14M22, 14E30, 32Q26}
\keywords
{Bogomolov-Gieseker inequality, Logarithmic tangent bundle, Log smooth pair, Projectively flat, Numerically projectively flat, Stability, Uniformization, Rational curve, MRC fibration, Rationally connected}
\begin{document}
\maketitle
\begin{abstract}
We study the structure of a log smooth pair 
when the equality holds in the Bogomolov-Gieseker inequality for the logarithmic tangent bundle and this bundle is semistable with respect to some ample divisor.
We also study the case of the canonical extension sheaf.
\end{abstract}
\tableofcontents

\section{Introduction}
Let $E$ be a vector bundle on a smooth projective variety $X$.
If $E$ is semistable with respect to some ample divisor $H$, then
the Bogolomov-Gieseker inequality holds:
$$\left( c_2(E) - \frac{r-1}{2r}c_1(E)^2 \right)H^{n-2} \ge 0.$$
If the equality holds, then $E$ is projectively flat.
Therefore, in the equality case of the Bogomolov-Gieseker inequality, the structure of a vector bundle is restricted.
Moreover, in \cite{GKP20a} and \cite{GKP20b}, we already know the structure of $X$
when the equality holds in the Bogomolov-Gieseker inequality for the tangent bundle $T_{X}$ or the canonical extension sheaf $\mathcal{E}$ (see  Definition \ref{can_ext} below) under the some assumptions.


\begin{thm} \cite[Theorem 1.3]{GKP20a}
\label{GKP_theorem}
Let $X$ be a projective klt variety. 
Assume that $-K_X$ is nef.
Then the following are equivalent.
\begin{enumerate}
\item 
There exists an ample Cartier divisor $H$ on $X$ such that 
the canonical extension sheaf 
$\mathcal{E}$ is $H$-semistable 
 and 
the equality holds in the Bogomolov-Gieseker inequality for $\mathcal{E}$:
$$
\left( \hat{c_2}(\mathcal{E}) - \frac{n}{2(n+1)}\hat{c_1}(\mathcal{E})^2 \right)[H]^{n-2} =
\left( \hat{c_2}(\Omega_{X}^{[1]}) - \frac{n}{2(n+1)}\hat{c_1}(\Omega_{X}^{[1]})^2 \right)[H]^{n-2}=0. 
$$
\item $X$ is a quotient of a projective space or an Abelian variety by the action of a finite group of automorphisms without fixed points in codimension one.
\end{enumerate}
\end{thm}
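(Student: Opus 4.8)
The plan is to prove the two implications separately; $(2)\Rightarrow(1)$ is a verification, and all the substance is in $(1)\Rightarrow(2)$. For $(2)\Rightarrow(1)$ I would first check the two models. On $\mathbb{P}^n$ one has $K_{\mathbb{P}^n}=\mathcal{O}(-n-1)$, and dualizing and twisting the Euler sequence identifies the canonical extension as $\mathcal{E}\cong\mathcal{O}_{\mathbb{P}^n}(-1)^{\oplus(n+1)}$, which is polystable with respect to every ample divisor, and a one-line Chern-class computation gives $\hat c_2(\mathcal{E})-\frac{n}{2(n+1)}\hat c_1(\mathcal{E})^2=0$. On an abelian variety $K_X$ is trivial, the canonical extension splits, $\mathcal{E}\cong\mathcal{O}_X^{\oplus(n+1)}$, and all Chern classes vanish. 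For $X=Y/G$ with $Y\in\{\mathbb{P}^n,\ \text{abelian variety}\}$ and $G$ acting freely in codimension one, let $\pi\colon Y\to X$ be the quotient; since $\pi$ is quasi-\'etale it does not change the canonical extension, $\pi^{[*]}\mathcal{E}_X\cong\mathcal{E}_Y$, and $\pi^*K_X=K_Y$. Picking $H$ to be a $G$-invariant ample class, $H$-semistability of $\mathcal{E}_X$ follows from that of $\mathcal{E}_Y$ (a destabilizing subsheaf would pull back to one on $Y$), and the number $\bigl(\hat c_2-\frac{n}{2(n+1)}\hat c_1^2\bigr)[H]^{n-2}$, which scales by $\deg\pi$ under pullback, vanishes on $X$ because it vanishes on $Y$.

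For $(1)\Rightarrow(2)$, Step 1 is to upgrade the hypothesis to a flatness statement: $H$-semistability together with equality in the Bogomolov--Gieseker inequality should force $\mathcal{E}$ to be \emph{numerically projectively flat}. I would derive this from the boundary case of the slope inequality along general complete-intersection curves combined with Mehta--Ramanathan-type restriction theorems for stability on klt spaces; equivalently, after the $\mathbb{Q}$-twist $\mathcal{F}:=\mathcal{E}\otimes\mathcal{O}_X\!\bigl(\tfrac{-1}{n+1}K_X\bigr)$ one gets $\det\mathcal{F}\equiv 0$ and $\mathcal{F}$ numerically flat (both $\mathcal{F}$ and $\mathcal{F}^{[\vee]}$ nef). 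Step 2 is a dichotomy on whether $K_X\equiv 0$. If $K_X\equiv 0$, then $K_X$ is torsion, so on a quasi-\'etale cover it becomes trivial, the canonical extension splits off $\mathcal{O}_X$, and $\Omega_X^{[1]}$ — hence $T_X$ — is numerically flat; the singular Beauville--Bogomolov-type structure theorem for klt varieties with numerically flat tangent sheaf (the klt analogue of Demailly--Peternell--Schneider) then produces a further quasi-\'etale cover which is an abelian variety, so $X$ is an abelian variety modulo a finite group acting freely in codimension one.

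In the remaining case $K_X\not\equiv 0$ the class $-K_X$ is nef and nonzero, the target is the projective-space case, and the main step is to prove that $X$ is rationally connected. Here I would combine the flat datum $\mathcal{F}$ with the structure theory of klt varieties with nef anticanonical class: after a quasi-\'etale cover there is a locally constant fibration over an abelian variety with rationally connected fibres, $\mathcal{F}$ is trivial on a general fibre $F$ (since $F$ is simply connected with $H^1(F,\mathcal{O}_F)=0$), so $\mathcal{E}|_F\cong L|_F^{\oplus(n+1)}$ with $L|_F$ an honest line bundle satisfying $(n+1)c_1(L|_F)=-K_F$; a divisibility bound for the nef class $-K_F$ on the rationally connected fibre $F$ then forces $\dim F=n$, so the abelian base reduces to a point and $X$ is rationally connected. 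Then $X$ is simply connected in the quasi-\'etale sense with $H^1(X,\mathcal{O}_X)=0$, so $\mathcal{F}$ is trivial, $\mathcal{E}\cong L^{\oplus(n+1)}$ for a line bundle $L$ with $(n+1)c_1(L)\equiv K_X$, and dualizing the canonical extension yields $0\to\mathcal{O}_X\to(L^{\vee})^{\oplus(n+1)}\to T_X\to 0$. Its $n+1$ tautological sections generate $L^{\vee}$, hence define a finite morphism $\phi\colon X\to\mathbb{P}^n$ pulling back the Euler sequence to this sequence; therefore $\phi$ is \'etale, hence an isomorphism, and in general $X=\mathbb{P}^n/G$.

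I expect the two genuinely hard points to be Step 1 — passing from the numerical Chern-class equality on a singular klt space to numerical projective flatness, which needs restriction theorems for stability on klt spaces together with the nonabelian Hodge / Simpson-type input — and the rational-connectedness step, which rests on the hard structure theory of klt varieties with nef anticanonical divisor. Throughout, the pervasive technical burden is to keep reflexive pullbacks, reflexive tensor operations and quasi-\'etale covers under control so that Chern classes, semistability and the canonical extension all behave functorially.
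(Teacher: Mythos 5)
This statement is quoted background, not a result of the paper: it is Theorem~1.3 of [GKP20a], reproduced verbatim, and the paper contains no proof of it. There is therefore nothing internal to compare your proposal against; I can only measure it against the published argument of Greb--Kebekus--Peternell. In outline your sketch does follow their strategy (verification on the two models and descent through quasi-\'etale quotients for $(2)\Rightarrow(1)$; for $(1)\Rightarrow(2)$, upgrading the semistability-plus-equality hypothesis to (numerical) projective flatness of the twisted canonical extension, then a dichotomy on whether $K_X\equiv 0$, with the abelian case handled by flatness of $T_X$ and the remaining case reduced to recognizing $\mathbb{P}^n$ via the Euler sequence).

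Two points in the $K_X\not\equiv 0$ branch are genuinely off as written. First, a numerically flat sheaf $\mathcal{F}$ on a rationally connected klt variety $X$ need not be trivial on $X$ itself; it corresponds to a representation of $\pi_1(X_{\mathrm{reg}})$, which is finite but possibly nontrivial. You must pass to the quasi-\'etale universal cover $\widetilde{X}\to X$ \emph{before} trivializing $\mathcal{F}$ and running the Euler-sequence argument; as written, your conclusion that $\phi\colon X\to\mathbb{P}^n$ is an isomorphism directly contradicts the target statement $X\cong\mathbb{P}^n/G$ with $G$ nontrivial. The group $G$ has to come from the Galois group of that cover, not be appended afterwards. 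Second, the divisibility step that kills the abelian base needs $-K_F$ to be ample (or at least big and nef) on the general fibre $F$ for a Kobayashi--Ochiai-type bound to apply; $-K_F$ is a priori only nef here, and this is precisely where the published proof does real work rather than quoting a one-line index bound. Minor: with this paper's conventions the canonical extension of $T_{\mathbb{P}^n}$ is $\mathcal{O}_{\mathbb{P}^n}(1)^{\oplus(n+1)}$ (the Euler sequence itself), not $\mathcal{O}_{\mathbb{P}^n}(-1)^{\oplus(n+1)}$; this does not affect the discriminant computation.
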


\begin{thm}\cite[Theorem 1.2]{GKP20b}
\label{GKP_theorem_2}
Let $X$ be a projective klt variety of dimension $n \ge 2$ and $H$ be an ample divisor on $X$.
If $\Omega_{X}^{[1]}$ is $H$-semistable and 
$$
\left( \hat{c_2}(\Omega_{X}^{[1]}) - \frac{n-1}{2n}\hat{c_1}(\Omega_{X}^{[1]})^2 \right)[H]^{n-2}=0,
$$
then $X$ is a quasi-Abelian variety, that is, there exists a quasi-\'etale cover 
$\widetilde{X} \rightarrow X$ from an Abelian variety $\widetilde{X}$ to $X$. 
\end{thm}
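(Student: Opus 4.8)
The plan is to reduce the statement to showing that $K_X=\hat{c_1}(\Omega_X^{[1]})$ is numerically trivial, and then to conclude by the decomposition theorem for klt spaces with numerically trivial canonical class. The starting point is the equality case of the Bogomolov–Gieseker inequality over the klt base $X$: the sheaf $\mathcal{E}nd(\Omega_X^{[1]})$ is again $H$-semistable, has $\hat{c_1}=0$, and satisfies $\hat{c_2}(\mathcal{E}nd(\Omega_X^{[1]}))\cdot[H]^{n-2}=2n\bigl(\hat{c_2}(\Omega_X^{[1]})-\tfrac{n-1}{2n}\hat{c_1}(\Omega_X^{[1]})^2\bigr)[H]^{n-2}=0$. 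After passing to a maximally quasi-\'etale cover in the sense of Greb–Kebekus–Peternell–Taji — which is harmless, since a quasi-\'etale cover of a quasi-Abelian klt space is again quasi-Abelian — one applies the Bando–Siu theory of Hermite–Einstein metrics on $H$-semistable reflexive sheaves over klt spaces to see that the Hermite–Einstein connection on $\mathcal{E}nd(\Omega_X^{[1]})\big|_{X_{\mathrm{reg}}}$ is flat. Thus $\mathcal{E}nd(\Omega_X^{[1]})$ is numerically flat, i.e.\ $\Omega_X^{[1]}$ is \emph{numerically projectively flat}; in particular all its $\mathbb{Q}$-Chern classes vanish, so $\hat{c_2}(\Omega_X^{[1]})=\tfrac{n-1}{2n}\hat{c_1}(\Omega_X^{[1]})^2$ holds in $H^4(X,\mathbb{R})$, and the $\mathbb{Q}$-twisted sheaf $\Omega_X^{[1]}\otimes\mathcal{O}_X(-\tfrac1nK_X)$ — being $H$-semistable with $\hat{c_1}=0$ and vanishing discriminant — is numerically flat. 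In particular $\mu_{\min,\alpha}(\Omega_X^{[1]})=\tfrac1n(K_X\cdot\alpha)$ for every movable class $\alpha$.

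\emph{Step 1: $X$ is not uniruled.} Suppose it were. Take a minimal dominating family of rational curves and let $f\colon\mathbb{P}^1\to X_{\mathrm{reg}}$ be a general member; by the theory of minimal rational curves (Cho–Miyaoka–Shepherd-Barron, Kebekus) it is a standard curve, so $f^*T_X\cong\mathcal{O}(2)\oplus\mathcal{O}(1)^{\oplus p}\oplus\mathcal{O}^{\oplus(n-1-p)}$ for some $0\le p\le n-1$. On the other hand $f^*\mathcal{E}nd(\Omega_X^{[1]})=\mathcal{E}nd(f^*\Omega_X^{[1]})$ is numerically flat on $\mathbb{P}^1$, hence trivial; writing $f^*\Omega_X^{[1]}=\bigoplus_i\mathcal{O}(a_i)$, this forces all $a_i$ equal, i.e.\ $f^*T_X\cong\mathcal{O}(e)^{\oplus n}$ for some $e$. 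For $n\ge2$ the two descriptions of $f^*T_X$ are incompatible, a contradiction. Hence $K_X$ is pseudo-effective (BDPP, in its klt form).

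\emph{Step 2: $K_X\equiv0$.} First, $X$ is not of general type. Indeed, if it were, let $X_{\mathrm{can}}$ be its canonical model, a klt space with $K_{X_{\mathrm{can}}}$ ample; numerical projective flatness of $\Omega_{X_{\mathrm{can}}}^{[1]}$ is inherited from that of $\Omega_X^{[1]}$ (the sheaves agree over a big open subset, and the flat structure on the endomorphism sheaf extends across the complement by the maximal quasi-\'etaleness), so $\hat{c_2}(\Omega_{X_{\mathrm{can}}}^{[1]})\cdot\hat{c_1}(\Omega_{X_{\mathrm{can}}}^{[1]})^{n-2}=\tfrac{n-1}{2n}\,\hat{c_1}(\Omega_{X_{\mathrm{can}}}^{[1]})^{n}$. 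The Miyaoka–Yau inequality for klt canonical models (Greb–Kebekus–Peternell–Taji) gives the reverse inequality with the larger constant $\tfrac{n}{2(n+1)}>\tfrac{n-1}{2n}$, and since $\hat{c_1}(\Omega_{X_{\mathrm{can}}}^{[1]})^{n}=K_{X_{\mathrm{can}}}^{n}>0$ this is absurd. Now if $K_X\not\equiv0$, then $K_X\cdot H^{n-1}>0$, because a nonzero pseudo-effective class has positive degree against an ample polarization; hence $\mu_{\min,H^{n-1}}(\Omega_X^{[1]})=\tfrac1n(K_X\cdot H^{n-1})>0$, and by the characterization of varieties of general type through generic ampleness of the cotangent sheaf (Campana–Paun, BDPP) $X$ would be of general type — contradiction. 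Therefore $K_X\equiv0$.

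\emph{Step 3: conclusion, and the main obstacle.} With $K_X\equiv0$, the sheaf $\Omega_X^{[1]}$ is $H$-semistable with $\hat{c_1}\equiv0$ and $\hat{c_2}(\Omega_X^{[1]})[H]^{n-2}=\tfrac{n-1}{2n}\hat{c_1}(\Omega_X^{[1]})^2[H]^{n-2}=0$, hence numerically flat, and so is $T_X$. By the decomposition theorem for klt spaces with numerically trivial canonical class (Druel, Greb–Guenancia–Kebekus, H\"oring–Peternell) there is a quasi-\'etale cover $\widehat{X}\cong A\times\prod_j Y_j\times\prod_k Z_k$ with $A$ Abelian and the $Y_j$ (resp.\ $Z_k$) simply connected Calabi–Yau (resp.\ irreducible holomorphic symplectic) klt spaces; pulling back, each direct summand $T_{Y_j}^{[1]}$, $T_{Z_k}^{[1]}$ is numerically flat, so by the Demailly–Peternell–Schneider structure of numerically flat sheaves (and simple connectedness of the factors) it contains $\mathcal{O}$ as a subsheaf whenever the factor is positive-dimensional. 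But a strict Calabi–Yau of dimension $m\ge2$ has $H^0(T_{Y_j}^{[1]})=H^0(\Omega_{Y_j}^{[m-1]})=0$, and a strict holomorphic symplectic space has no nonzero reflexive vector fields; hence all $Y_j$ and $Z_k$ are points, $\widehat{X}=A$, and $X$ is quasi-Abelian. I expect the genuine difficulty to be twofold: establishing numerical projective flatness over the singular base (running the Bando–Siu/Uhlenbeck–Yau theory, the $\mathbb{Q}$-Chern class formalism, and the maximally-quasi-\'etale reduction in the klt category) and — more delicately — tracking numerical flatness of $\mathcal{E}nd(\Omega_X^{[1]})$ through the birational modification $X\dashrightarrow X_{\mathrm{can}}$ in Step 2, where the decisive point is the strict inequality $\tfrac{n-1}{2n}<\tfrac{n}{2(n+1)}$ between the Bogomolov–Gieseker and Miyaoka–Yau normalisations of the discriminant. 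By contrast, once numerical projective flatness is in hand, the trichotomy uniruled / of general type / $K_X\equiv0$ closes up with comparatively little effort.
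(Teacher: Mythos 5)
First, a point of reference: the paper does not prove this statement at all --- it is Theorem 1.2 of [GKP20b], quoted verbatim as an input to the main results --- so there is no in-paper proof to compare against; what follows measures your proposal against the argument of Greb--Kebekus--Peternell and against its own internal logic. Your skeleton is the right one and close in spirit to theirs: deduce from semistability and the vanishing discriminant that $\Omega_{X}^{[1]}$ is numerically projectively flat (Bando--Siu theory over klt spaces, maximally quasi-\'etale covers), run the trichotomy uniruled / general type / $K_X\equiv 0$, eliminate the first two cases, and conclude from $K_X\equiv 0$ together with $\hat{c_2}(\Omega_{X}^{[1]})\cdot[H]^{n-2}=0$ that $X$ is quasi-Abelian. (For the last step you do not need the full decomposition theorem: ``klt, $K_X\equiv 0$, vanishing second $\mathbb{Q}$-Chern class $\Rightarrow$ quasi-Abelian'' is available as a single citation, and that is what the literature actually uses.)

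The genuine gap is in Step 2, and you have located it yourself but not closed it. The justification offered for transporting the equality $\hat{c_2}\cdot\hat{c_1}^{\,n-2}=\tfrac{n-1}{2n}\hat{c_1}^{\,n}$ to the canonical model --- ``the sheaves agree over a big open subset'' --- is false: the birational contraction $X\dashrightarrow X_{\mathrm{can}}$ in general contracts divisors, so it is not an isomorphism in codimension one, reflexive differentials and their $\mathbb{Q}$-Chern numbers do not match up across it, and maximal quasi-\'etaleness (which concerns covers branched in codimension at least two) has no bearing on extending a flat structure across a divisorial exceptional locus. Since the whole point of Step 2 is the strict inequality $\tfrac{n-1}{2n}<\tfrac{n}{2(n+1)}$ applied on the canonical model, the argument does not go through as written; one must either prove an actual descent statement for projectively flat reflexive sheaves under divisorial contractions or argue directly on $X$ from the projectively flat structure, which is where [GKP20b] spends its effort. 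A secondary issue of the same kind runs through Steps 1--3: the minimal-rational-curves theory (standard splitting types), the Demailly--Peternell--Schneider description of numerically flat bundles, and the Campana--P\u{a}un generic-positivity criterion are all invoked for a klt variety although they are theorems about smooth varieties; each has a usable singular analogue, but passing to a resolution changes $\Omega_{X}^{[1]}$ precisely along divisors, and these reductions are not automatic.
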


We point out that 
$c_1(\Omega^{1}_{X})^2 = c_1(T_{X})^2 = c_1(\mathcal{E})^2$ and $c_2(\Omega^{1}_{X}) = c_2(T_{X})=c_2(\mathcal{E})$ for any smooth projective variety $X$ and the canonical extension sheaf $\mathcal{E}$.
By Theorem \ref{GKP_theorem} and \ref{GKP_theorem_2}, 
we have the structure theorem of a klt variety $X$ when the equality holds in the 
Bogomolov-Gieseker inequality for the tangent sheaf $T_X$ (or the canonical extension sheaf $\mathcal{E}$) and this sheaf is $H$-semistable.

In this paper, we study a generalization of Theorem \ref{GKP_theorem} and \ref{GKP_theorem_2} to a log smooth pair $(X,D)$. 
Before the main theorems, we recall the definition of the canonical extension sheaf. 

\begin{defn}\cite[Proposition 2.10]{chili}, \cite[Chapter 4]{GKP20a}
\label{can_ext}
Let $X$ be a smooth projective variety, $D$ be a simple normal crossing divisor on $X$, and $L$ be a line bundle on $X$.
By the natural homomorphism of cohomology groups
$$H^{1}(X, \mathcal{O}_{X}^{*}) \xrightarrow{c_1} H^{1}(X, \Omega_{X}^{1}) 
 \xrightarrow{\Phi} H^{1}(X, \Omega_{X}^{1}(\log D)) = \text{Ext}^1(\mathcal{O}_X, \Omega_{X}^{1}(\log D)), $$
there exist a vector bundle $W_L$ induced by $\Phi(c_1(L))$ and the following exact sequence 
$$
0 \rightarrow \Omega_{X}^{1}(\log D) \rightarrow W_{L} \rightarrow \mathcal{O}_X \rightarrow 0.
$$
Let $\mathcal{E}_L$ be a dual bundle of $W_L$.
Then we have 
\begin{equation}
\label{can_ext_exact}
0 \rightarrow  \mathcal{O}_X  \rightarrow \mathcal{E}_{L}  \rightarrow T_{X}(-\log D) \rightarrow 0.
\end{equation}
$\mathcal{E}_L$ is called the \textit{extension sheaf of $T_{X}(- \log D)$ by $\mO_{X}$ with the extension class $c_1(L)$}.
In particular, $\mathcal{E}_{\mO_{X}(-(K_X+D))}$ is called the \textit{canonical extension sheaf of $T_{X}(- \log D)$ by $\mO_{X}$.}
\end{defn}

By \cite[Theorem 0.1]{Tian}, if $D=0$ and a Fano variety $X$ has a K\"ahler-Einstein metric, then the canonical extension sheaf $\mathcal{E}_{\mO_{X}(-K_X)}$ is $-K_X$-semistable, thus the Miyaoka-Yau inequality holds by the Bogomolov-Gieseker inequality for $\mathcal{E}_{\mO_{X}(-K_X)}$.
In the case of a log smooth pair, by \cite[Theorem 1.4]{chili}, 
if $(X,D)$ is a log smooth log-Calabi-Yau pair, then the extension sheaf $\mathcal{E}_H$ is $H$-semistable 
for any ample line bundle $H$, thus $c_2\bigl(T_X(- \log D)\bigr) H^{n-2} \ge 0$ holds by the Bogomolov-Gieseker inequality for $\mathcal{E}_{H}$.
It is easily seen that $c_1(\mathcal{E}_L) = c_1\bigl(T_X(- \log D)\bigr)$ and $c_2(\mathcal{E}_L) = c_2\bigl(T_X(- \log D)\bigr)$.

Now we state the main results.

\begin{thm}
\label{uniform_n+1}
Let $X$ be a smooth projective variety of dimension $n\ge 2$,
$D$ be a simple normal crossing divisor on $X$,
and $H$ be an ample divisor on $X$.
Assume that $-(K_X+D)$ is nef. 

If the extension sheaf $\mathcal{E}_L$ 
is $H$-semistable for some line bundle $L$ and
\begin{equation}
\label{BMY_n+1}
\left( c_2\bigl( T_{X}(- \log D) \bigr) - \frac{n}{2(n+1)} c_1\bigl(T_X(- \log D)\bigr)^2 \right)H^{n-2} =0, 
\end{equation}
then one of the following statements holds.
\begin{enumerate}
\item 
$(X,D)$ is a toric fiber bundle over a finite \'etale quotient of an Abelian variety.
Strictly speaking, there exists a smooth morphism $f: X \rightarrow Y$ such that $Y$ is a finite \'etale quotient of an Abelian variety $($i.e. there exists a finite \'etale cover 
$A \rightarrow Y$ from an Abelian variety $A$ to $Y$$)$, 
$f: (X,D) \rightarrow Y$ is locally trivial for the analytic topology, and any fiber $F$ of $f$ is a smooth toric variety with a boundary divisor $D|_{F}$.
\item $(X, D)$ is isomorphic to $ (\mathbb{P}^n , 0)$.
\end{enumerate}
\end{thm}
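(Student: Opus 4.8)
The plan is to show that the hypotheses force $\mathcal{E}_L$ to be numerically projectively flat, and then to read off the geometry of $(X,D)$ via the MRC fibration.

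\textbf{Reduction to numerical projective flatness.}
The equality \eqref{BMY_n+1} says exactly that the discriminant $\Delta(\mathcal{E}_L)=2(n+1)c_2(\mathcal{E}_L)-n\,c_1(\mathcal{E}_L)^2$ satisfies $\Delta(\mathcal{E}_L)\cdot H^{n-2}=0$ (recall $c_i(\mathcal{E}_L)=c_i(T_X(-\log D))$). Combining this with the $H$-semistability of $\mathcal{E}_L$ and the projective flatness of semistable bundles in the equality case recalled in the introduction — applied to the stable Jordan--H\"older factors of $\mathcal{E}_L$, which all share the slope of $\mathcal{E}_L$ and have vanishing discriminant — I would conclude that $\mathcal{E}_L$ is numerically projectively flat. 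Since the determinant $c_1(\mathcal{E}_L)=-(K_X+D)$ is nef, a numerically projectively flat bundle with nef determinant is nef; hence $\mathcal{E}_L$ is nef, and so is its quotient $T_X(-\log D)$ from \eqref{can_ext_exact}.

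\textbf{MRC fibration and the general fiber.}
Using nefness of $-(K_X+D)$ I would invoke the structure theory for pairs with nef anti-log-canonical class: the MRC fibration $\psi\colon X\to Z$ is an everywhere-defined smooth morphism onto a smooth projective $Z$ with $K_Z$ numerically trivial, with $D$ vertical with respect to $\psi$, and with general fiber $F$ smooth rationally connected carrying the boundary $D_F:=D|_F$. Then $F$ is simply connected, $H^1(\mathcal{O}_F)=0$, $H^2(F,\mathbb{Z})$ is torsion-free, and $\mathcal{E}_L|_F$ is again numerically projectively flat with $c_1(\mathcal{E}_L|_F)=-(K_F+D_F)$. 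If $-(K_F+D_F)\equiv 0$, then $\mathcal{E}_L|_F$ is numerically flat, hence trivial over the simply connected $F$ with $H^1(\mathcal{O}_F)=0$; so $T_F(-\log D_F)=\mathcal{E}_L|_F/\mathcal{O}_F$ is trivial (the sub $\mathcal{O}_F$ is a constant subsheaf), and a rationally connected log smooth pair with trivial logarithmic tangent bundle is a smooth complete toric variety with $D_F$ its full torus-invariant boundary. If instead $-(K_F+D_F)\not\equiv 0$, I would trivialize the flat projective bundle $\mathbb{P}(\mathcal{E}_L|_F)$, write $\mathcal{E}_L|_F\cong M^{\oplus(m+1)}$ with $m=\dim F$ and $M$ nef and $(m+1)c_1(M)=-(K_F+D_F)$, and recognize the restriction of \eqref{can_ext_exact} as the pullback of the Euler sequence under the morphism $\mu\colon F\to\mathbb{P}^m$ given by the $m+1$ base-point-free sections of $M$ defining the sub $\mathcal{O}_F$, so that $T_F(-\log D_F)\cong\mu^*T_{\mathbb{P}^m}$. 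Since $T_{\mathbb{P}^m}$ is ample, $\mu$ is finite once $T_F(-\log D_F)$ is ample, which one checks from nefness of $T_F(-\log D_F)$ and rational connectedness of $F$ (no positive-dimensional $\mu$-fiber, on which $T_F(-\log D_F)$ would be numerically trivial); then $\mu$ is finite and birational, hence an isomorphism, so $F\cong\mathbb{P}^m$ and $D_F=0$. But then $c_1(\mathcal{E}_L|_F)=(m+1)H_{\mathbb{P}^m}$ must be divisible by $n+1=\operatorname{rank}\mathcal{E}_L$ in $H^2(\mathbb{P}^m,\mathbb{Z})$, forcing $m=n$: this case occurs only when $Z$ is a point, whence $(X,D)\cong(\mathbb{P}^n,0)$, alternative (2).

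\textbf{Global assembly in the toric case.}
In the remaining case every fiber of $\psi$ is a toric pair with full boundary, so $-(K_X+D)$ vanishes on fibers, the bundles $T_{X/Z}(-\log D)$ and $\mathcal{E}_L$ are trivial on fibers, and $\mathcal{E}_L=\psi^*\mathcal{E}_Z$ descends. From numerical projective flatness of $\mathcal{E}_Z$ and $K_Z\equiv 0$, using the logarithmic Beauville--Bogomolov-type decomposition, I would show $Z$ has a finite \'etale cover by an abelian variety: the potential Calabi--Yau and hyperk\"ahler factors of $Z$ are excluded because on such a factor the descended bundle would have strictly positive discriminant against a polarization, contradicting numerical projective flatness. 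After a finite \'etale base change to an abelian variety, the triviality of $\mathcal{E}_L$ along fibers, together with the classification of smooth complete toric varieties, lets one rigidify $\psi$ and realize $\psi\colon(X,D)\to Z$ as an analytically locally trivial fibration of pairs with smooth toric fibers; this is alternative (1).

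\textbf{Expected main obstacle.}
The crux is the last step: converting the fiberwise/numerical triviality furnished by numerical projective flatness into genuine analytic local triviality of the pair $\psi\colon(X,D)\to Z$, while simultaneously ruling out Calabi--Yau and hyperk\"ahler factors of $Z$ so as to identify $Z$ with an \'etale quotient of an abelian variety. Subsidiary difficulties are the finiteness (hence isomorphism) of the developing map $\mu$ in the fiber analysis, the verification that numerical projective flatness is preserved under restriction to general complete intersections of $H$ and under descent along $\psi$, and the treatment of the non-uniruled situation (MRC fibration trivial) through the structure theory of log-Calabi--Yau pairs.
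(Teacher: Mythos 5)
Your opening reduction (numerical projective flatness of $\mathcal{E}_L$ via Theorem-Definition \ref{num_proj_def}, hence nefness of $\mathcal{E}_L$ and of its quotient $T_X(-\log D)$ from \eqref{can_ext_exact}) matches the paper, and the dichotomy you aim for is the right one. But the way you produce and control the fibration has genuine gaps. You invoke ``the structure theory for pairs with nef anti-log-canonical class'' to get an everywhere-defined smooth MRC fibration $\psi\colon X\to Z$ onto a $K$-trivial base, and later identify $Z$ with an \'etale quotient of an abelian variety via a Beauville--Bogomolov-type decomposition. No such theorem is available when only $-(K_X+D)$, rather than $-K_X$, is nef; this is essentially the open Conjecture \ref{CCM_conj} of Campana--Cao--Matsumura recalled in Subsection \ref{sRC_quotient}. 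The paper instead observes that nefness of $T_X(-\log D)$ makes $T_X$ pseudo-effective and applies the Hosono--Iwai--Matsumura structure theorem, which directly gives a smooth morphism onto a finite \'etale quotient of an abelian variety with rationally connected fibers. Your exclusion of Calabi--Yau and hyperk\"ahler factors by ``strictly positive discriminant'' is also incorrect as stated: numerically flat bundles on such factors have vanishing discriminant.

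Second, your assertion that $D$ is vertical with respect to $\psi$ is false (and contradicts your own discussion of the toric boundary $D|_F$). The real work, occupying most of the paper's proof, is to show that every fiber meets $D$ transversally and that $f\colon (X,D)\to Y$ is locally a product of pairs, so that $D_F:=D|_F$ makes sense and $T_F(-\log D_F)$ sits in $T_X(-\log D)|_F$ with quotient $\mathcal{O}_F^{\oplus \dim Y}$; the paper proves this by showing the section $\wedge^{\dim Y}s$ of $\wedge^{\dim Y}\Omega^1_X(\log D)\otimes f^*\det T_Y$ is nowhere vanishing and then computing in local coordinates, and it obtains analytic local triviality from $H^1(F,T_F(-\log D_F))=0$ together with Kawamata's theorem on logarithmic deformations, not from a soft ``rigidification''. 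Finally, your treatment of the case $K_F+D_F\not\equiv 0$ is muddled: $\mathcal{E}_L|_F$ has rank $n+1$, not $\dim F+1$, so the Euler-sequence identification cannot be applied on a proper fiber, and the step ``finite and birational, hence an isomorphism'' for the developing map is unjustified. The paper disposes of this mixed case cleanly: if $\dim Y>0$, the exact sequence \eqref{log_tangent_exact} forces $K_F+D_F$ to be nef, hence numerically trivial; so the nontrivial case occurs only for $\dim Y=0$, where $X$ is rationally connected and simply connected, $\mathcal{E}_L\cong \mathcal{O}_X(B)^{\oplus (n+1)}$, and the Fujino--Miyamoto characterization of $\mathbb{P}^n$ concludes.
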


\begin{thm}
\label{uniform_n}
Let $X$ be a smooth projective variety of dimension $n\ge 2$,
$D$ be a simple normal crossing divisor on $X$,
and $H$ be an ample divisor on $X$.
Assume that $-(K_X+D)$ is nef. 

If $T_{X}(- \log D)$ is $H$-semistable and
\begin{equation}
\label{BMY_n}
\left(c_2\bigl(T_X(- \log D)\bigr) - \frac{n-1}{2n} c_1\bigl(T_X(- \log D) \bigr)^2 \right)H^{n-2} =0,
\end{equation}
then one of the following statements holds.
\begin{enumerate}
\item $(X,D)$ is a toric fiber bundle over a finite \'etale quotient of an Abelian variety.
\item $X$ is rationally connected, $K_X + D \not \equiv  0$, and there exists a Cartier divisior $B$ on $X$ such that $T_{X}(- \log D) \cong \mO_{X}(B)^{\oplus n} $.
\end{enumerate}
Moreover, 
if (2) holds and $(X,D) $ is a Mori fiber space, then $(X,D)$ is isomorphic to $(\mathbb{P}^n, H_{\mP^n})$, where $H_{\mP^n}$ is a hyperplane of $\mP^n$.
\end{thm}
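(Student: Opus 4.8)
The plan is to establish the dichotomy \emph{(1)}--\emph{(2)} by first upgrading the numerical hypothesis to projective flatness, then splitting according to whether $X$ is rationally connected, and finally deducing the Mori fiber space refinement. Equation \eqref{BMY_n} is exactly the equality case of the Bogomolov--Gieseker inequality for the rank-$n$ bundle $T_X(-\log D)$, so by the fact recalled in the introduction, $H$-semistability forces $T_X(-\log D)$ to be projectively flat; in particular the full discriminant class vanishes, $2n\,c_2\bigl(T_X(-\log D)\bigr)=(n-1)\,c_1\bigl(T_X(-\log D)\bigr)^2$ in $H^4(X,\R)$. Since $c_1\bigl(T_X(-\log D)\bigr)=-(K_X+D)$ is nef, the bundle is moreover numerically projectively flat: there is a $\Q$-line bundle $N\equiv\tfrac1n c_1\bigl(T_X(-\log D)\bigr)$ such that $T_X(-\log D)\otimes N^{-1}$ is numerically flat in the sense of Demailly--Peternell--Schneider. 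I then treat two cases according to the rational connectedness of $X$.

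Suppose first that $X$ is rationally connected. Then $\pi_1(X)=0$ and $H^1(X,\mO_X)=0$, so every numerically flat bundle on $X$ is trivial: its Hermitian-flat graded pieces are copies of $\mO_X$ and the extensions split because $H^1(X,\mO_X)=0$. Applied to $T_X(-\log D)\otimes N^{-1}$, this yields a Cartier divisor $B$ with $T_X(-\log D)\cong\mO_X(B)^{\oplus n}$ and $nB\equiv -(K_X+D)$. If $K_X+D\equiv 0$ then $B\equiv 0$, hence $B\sim 0$ since $\mathrm{Pic}^0(X)=0$, so $T_X(-\log D)\cong\mO_X^{\oplus n}$; by the characterization of toric varieties through a trivial logarithmic tangent bundle (Winkelmann), $(X,D)$ is then a smooth toric pair, which is conclusion \emph{(1)} over a point. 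If instead $K_X+D\not\equiv 0$, we are exactly in conclusion \emph{(2)}.

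Suppose now that $X$ is \emph{not} rationally connected. This is the technical heart, and I would follow the strategy behind Theorem \ref{GKP_theorem_2} and its extension-sheaf analogue Theorem \ref{uniform_n+1}, transplanted to the log smooth setting. The numerically flat twist of $T_X(-\log D)$ determines a flat, hence algebraically integrable, subfoliation; combining its algebraic integrability with the structure theory for pairs with $-(K_X+D)$ nef (smoothness and surjectivity of the Albanese-type map, and isotriviality of the rationally connected part) should produce a smooth surjective morphism $f\colon X\to Y$ onto a finite \'etale quotient $Y$ of an abelian variety, analytically locally trivial for $(X,D)$, whose fibres $F$ are rationally connected with $T_F(-\log D|_F)$ projectively flat. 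Applying the rationally connected case fibrewise splits $T_F(-\log D|_F)$ into line bundles, and the log-Calabi--Yau nature of the fibres forces the splitting to be trivial, so each $(F,D|_F)$ is a smooth toric pair; this is precisely conclusion \emph{(1)}. The main obstacle lies exactly here: proving that the numerically flat foliation integrates to an \emph{analytically locally trivial} morphism with abelian-type base and \emph{toric} fibres requires the logarithmic analogues of the decomposition and Albanese structure theorems for pairs with nef anticanonical class, and is where essentially all the difficulty resides.

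Finally, assume \emph{(2)} holds and that $(X,D)$ is a Mori fiber space $g\colon X\to S$, so that $-(K_X+D)=nB$ is $g$-ample and hence $B$ is $g$-ample. If $\dim S>0$, restrict the differential $dg\colon T_X(-\log D)\to g^*T_S$ to a general fibre $F$; it is surjective where $g$ is smooth, yet on $F$ it becomes a nonzero homomorphism $\mO_F(B|_F)^{\oplus n}\to\mO_F^{\oplus\dim S}$, which is impossible because $B|_F$ is ample and $\mathrm{Hom}\bigl(\mO_F(B|_F),\mO_F\bigr)=0$. Hence $S$ is a point, $X$ has Picard number one, and $B$ is ample. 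Then $T_X(-\log D)\cong\mO_X(B)^{\oplus n}$ is ample, so the full-rank inclusion $T_X(-\log D)\subseteq T_X$, whose cokernel is torsion supported on $D$, forces $T_X$ to be ample, and Mori's theorem gives $X\cong\mP^n$. Comparing $T_{\mP^n}(-\log D)\cong\mO(B)^{\oplus n}$ with the Euler sequence then forces $B=\mO_{\mP^n}(1)$ and $D$ to be a single hyperplane, i.e. $(X,D)\cong(\mP^n,H_{\mP^n})$.
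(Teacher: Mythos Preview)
Your dichotomy argument is organized differently from the paper's but is close in spirit. The paper does not split upfront on whether $X$ is rationally connected, nor does it invoke any ``numerically flat foliation'' and its algebraic integrability as you sketch for the non-RC case. Instead, once $T_X(-\log D)$ is numerically projectively flat and $-(K_X+D)$ is nef, one gets that $T_X(-\log D)$ is nef, hence $T_X$ is pseudo-effective, and \cite{HIM21} immediately produces a smooth MRC-type morphism $f\colon X\to Y$ onto a finite \'etale quotient of an abelian variety with rationally connected fibres. A direct local computation with $\wedge^{\dim Y}(f^*\Omega^1_Y\to\Omega^1_X(\log D))$ shows $f$ is analytically locally trivial for the pair, and the nefness of the twist $T_X(-\log D)\langle (K_X+D)/n\rangle$ forces $K_F+D_F\equiv 0$ on every fibre when $\dim Y>0$, whence the toric conclusion via \cite{win}. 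So the obstacle you flag is handled by citing \cite{HIM21} rather than by a foliation argument.

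For the Mori fiber space refinement your route genuinely diverges from the paper's and contains one real gap. Your argument that $\dim S=0$ via the composed map $T_X(-\log D)\hookrightarrow T_X\to g^*T_S$ restricted to a general fibre is correct and is in fact cleaner than the paper's approach, which first identifies $F\cong\mathbb P^{n-1}$ through \cite{FM20} and then derives the contradiction $H^1(F,\Omega^1_F)=0$ from a cohomology chase. The gap is in the next step: from $T_X(-\log D)\cong\mO_X(B)^{\oplus n}$ ample and the inclusion $T_X(-\log D)\subset T_X$ with torsion cokernel you conclude that $T_X$ is ample. This implication is not standard and is not justified by what you wrote; a full-rank ample subsheaf does not force ampleness of the ambient bundle in general (the cokernel is supported on $D$, and for curves $C\subset D$ the restricted sequence no longer has torsion quotient, so even a curve-by-curve check breaks down). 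The paper avoids this entirely: once $\dim S=0$ it rules out $D=0$ by Kodaira vanishing (which would give $H^1(X,\Omega^1_X)=0$), and then, since the log Fano index of $(X,D)$ is at least $n$, invokes Fujita's classification \cite{Fujita12} to conclude $(X,D)\cong(\mathbb P^n,H_{\mathbb P^n})$. If you want to keep your line of argument, replace the unjustified ``$T_X$ ample'' step by either the Andreatta--Wi\'sniewski theorem (an ample locally free subsheaf of $T_X$ forces $X\cong\mathbb P^n$), or by the index computation $-K_X\equiv nB+D$ with $D\equiv dB$, $d\ge 1$, so the Fano index is at least $n+1$ and Kobayashi--Ochiai applies.
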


By Theorem \ref{uniform_n+1} and \cite[Theorem 1.4]{chili}, we obtain the following corollary. 

\begin{cor}[A characterization of a toric fiber bundle]
\label{Numerical_characterizations}
Let $(X, D) $ and $H$ be as in Theorem \ref{uniform_n+1}.
If $c_1\bigl(T_X(- \log D)\bigr) = 0$ and  $c_2\bigl(T_{X}(- \log D)\bigr) H^{n-2}=0$, then $(X,D)$ is a toric fiber bundle over a finite \'etale quotient of an Abelian variety. 
\end{cor}
We emphasize that Corollary \ref{Numerical_characterizations} is also an easy consequence of \cite[Corollary 1.7]{DB20} and \cite[Theorem 1.4]{chili}. In Remark \ref{DLB_result}, we give an another short proof of \cite[Corollary 1.7]{DB20}.

As a difference from Theorem \ref{GKP_theorem_2},
even if $T_X(- \log D)$ is $H$-semistable and Equality (\ref{BMY_n}) holds,
$(X,D)$ is not necessarily a toric fiber bundle over a finite \'etale quotient of an Abelian variety.
In fact, we obtain the following examples which are different from toric fiber bundles.

\begin{prop}[= Subsection \ref{projective_space} and Subsection \ref{hirzebruch}]
\text{}
\begin{enumerate}
\item Let $H_{\mP^n}$ be a hyperplane of $\mP^n$. Then $-(K_{\mP^n} + H_{\mP^n})$ is nef, $T_{\mP^n}(- \log  H_{\mP^n})$ is $H_{\mP^n}$-semistable, and Equality $($\ref{BMY_n}$)$ holds.
\item Let $m$ be a positive integer and $\mathbb{F}_m := \mathbb{P}(\mO_{\mP^1} \oplus \mO_{\mP^1}(-m))$ be the $m$-th Hirzebruch surface.
Then there exists a simple normal crossing divisor $D$ on $\mathbb{F}_m$ such that 
$-(K_{\mathbb{F}_m} + D)$ is nef, $T_{\mathbb{F}_m}(- \log  D)$ is $H$-semistable, Equality $($\ref{BMY_n}$)$ holds, and $(\mathbb{F}_m,D)$ is not a Mori fiber space.
Moreover, if $m \ge 2$, then a minimal model of $(\mathbb{F}_m, D)$ is not isomorphic to  $(\mathbb{P}^2, H_{\mP^2})$. 
\end{enumerate}
\end{prop}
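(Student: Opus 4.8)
The plan is to handle the two items separately, in each case by computing the logarithmic tangent sheaf explicitly.

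\emph{Item (1).} The key step is the classical isomorphism $T_{\mP^n}(-\log H_{\mP^n})\cong\mO_{\mP^n}(1)^{\oplus n}$. I would fix homogeneous coordinates with $H_{\mP^n}=\{x_0=0\}$ and note that in the Euler sequence $0\to\mO\to\mO(1)^{\oplus(n+1)}\to T_{\mP^n}\to0$ the sections $\partial/\partial x_1,\dots,\partial/\partial x_n$ of $T_{\mP^n}(-1)$ are tangent to $H_{\mP^n}$, hence define $n$ sections of $T_{\mP^n}(-\log H_{\mP^n})\otimes\mO(-1)$; a chart-by-chart check (on $\{x_0\ne0\}$ they are the coordinate frame, and on each chart $\{x_i\ne0\}$ with $i\ge1$ the Euler relation shows they remain a frame) gives $T_{\mP^n}(-\log H_{\mP^n})\otimes\mO(-1)\cong\mO^{\oplus n}$. (Alternatively, combine the residue sequence $0\to T_{\mP^n}(-\log H_{\mP^n})\to T_{\mP^n}\to\mO_{H_{\mP^n}}(1)\to0$ with the Euler sequence.) Given this, the three assertions are immediate: $-(K_{\mP^n}+H_{\mP^n})=\mO(n)$ is ample, hence nef; a direct sum of copies of a single line bundle is slope-semistable for every polarization, in particular for $H_{\mP^n}$; and with $h=c_1(\mO(1))$ one has $c_1=nh$ and $c_2=\binom{n}{2}h^2$, so $c_2-\tfrac{n-1}{2n}c_1^2=\tfrac{n(n-1)}{2}h^2-\tfrac{n(n-1)}{2}h^2=0$, which gives $(\ref{BMY_n})$ after cupping with $H_{\mP^n}^{n-2}$.

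\emph{Item (2).} I would take $D:=C_0+C_\infty$, the sum of the two disjoint sections of the ruling $\pi\colon\mathbb{F}_m\to\mP^1$ with $C_0^2=-m$ and $C_\infty^2=m$; since these sections are disjoint, $D$ is smooth, hence SNC, and from $K_{\mathbb{F}_m}=-2C_0-(m+2)F$ ($F$ a fibre) one computes $K_{\mathbb{F}_m}+D=-2F$, so $-(K_{\mathbb{F}_m}+D)=2F=\pi^*\mO_{\mP^1}(2)$ is base-point-free, hence nef. The heart of the matter is to prove $T_{\mathbb{F}_m}(-\log D)\cong\mO_{\mathbb{F}_m}(F)^{\oplus2}$. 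Restricting the relative tangent sequence to logarithmic fields along the two (horizontal) sections gives $0\to T_{\mathbb{F}_m/\mP^1}(-\log D)\to T_{\mathbb{F}_m}(-\log D)\to\pi^*T_{\mP^1}\to0$, where $T_{\mathbb{F}_m/\mP^1}(-\log D)=T_{\mathbb{F}_m/\mP^1}(-C_0-C_\infty)\cong\mO_{\mathbb{F}_m}$ and $\pi^*T_{\mP^1}\cong\mO_{\mathbb{F}_m}(2F)$. Since $\mO(2F)$ and $\mO$ are fibrewise trivial, so is $T_{\mathbb{F}_m}(-\log D)$, whence $T_{\mathbb{F}_m}(-\log D)\cong\pi^*\bigl(\pi_*T_{\mathbb{F}_m}(-\log D)\bigr)$, and pushing the sequence forward yields $0\to\mO_{\mP^1}\to\pi_*T_{\mathbb{F}_m}(-\log D)\to\mO_{\mP^1}(2)\to0$ (using $\pi_*\mO_{\mathbb{F}_m}=\mO_{\mP^1}$ and $R^1\pi_*\mO_{\mathbb{F}_m}=0$); its class in $\operatorname{Ext}^1_{\mP^1}(\mO(2),\mO)=H^1(\mP^1,\mO_{\mP^1}(-2))\cong\C$ I would compute by a \v{C}ech argument in the two standard charts of $\mathbb{F}_m$ to be a nonzero multiple of a generator — it is, up to sign, $c_1$ of the line bundle $\mO_{\mP^1}(-m)$ defining the ruling — and this is where the hypothesis $m\ge1$ enters. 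Hence $\pi_*T_{\mathbb{F}_m}(-\log D)\cong\mO_{\mP^1}(1)^{\oplus2}$ and $T_{\mathbb{F}_m}(-\log D)\cong\mO_{\mathbb{F}_m}(F)^{\oplus2}$. (For $m=0$ the class vanishes and $T_{\mathbb{F}_0}(-\log D)\cong\mO\oplus\mO(2F)$ is unstable for every polarization.) From this isomorphism I get $H$-semistability for every ample $H$ (a rank-one subsheaf has $H$-degree at most $H\cdot F$, the slope) and $c_1=2F$, $c_2=F^2=0$, so $c_2-\tfrac14c_1^2=0$, i.e.\ $(\ref{BMY_n})$ for $n=2$.

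It remains to check the last two, geometric, assertions. That $(\mathbb{F}_m,D)$ is not a Mori fibre space follows because, for $m\ge1$, the only contractions of $\mathbb{F}_m$ onto a lower-dimensional variety are $\pi$ and the constant map, and $-(K_{\mathbb{F}_m}+D)=2F$ is $\pi$-trivial and is zero on $F$, so it is relatively ample for neither (this is also forced by the final clause of Theorem \ref{uniform_n}, since $(\mathbb{F}_m,D)$ satisfies case (2) and is not $(\mP^2,H_{\mP^2})$). For $m\ge2$, I would run the $(K_{\mathbb{F}_m}+D)$-MMP: since $(K_{\mathbb{F}_m}+D)\cdot C_0=-2<0$ while $(K_{\mathbb{F}_m}+D)\cdot F=0$, the unique $(K_{\mathbb{F}_m}+D)$-negative extremal ray is $\R_{\ge0}[C_0]$, and its contraction $c\colon\mathbb{F}_m\to X_1$ contracts the negative section $C_0$ (with $C_0^2=-m\le-2$), so $X_1$ is the projective cone over the degree-$m$ rational normal curve, which has a cyclic quotient singularity at its vertex and, for $m\ge2$, is not isomorphic to $\mP^2$. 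With $D_1:=c_*D=c_*C_\infty$ (which avoids the singular point), one gets $c^*(K_{X_1}+D_1)=-\tfrac2m C_0-2F$, so the discrepancy of $C_0$ over $(X_1,D_1)$ is $\tfrac{2-m}{m}\in(-1,0]$, making $(X_1,D_1)$ klt, while $c^*(K_{X_1}+D_1)\cdot F=-\tfrac2m<0$ together with $\rho(X_1)=1$ shows $-(K_{X_1}+D_1)$ is ample; thus the MMP terminates at the Mori fibre space $(X_1,D_1)$, which is a minimal model of $(\mathbb{F}_m,D)$, and $X_1\not\cong\mP^2$ gives $(X_1,D_1)\not\cong(\mP^2,H_{\mP^2})$. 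The main obstacle should be the identification $T_{\mathbb{F}_m}(-\log D)\cong\mO_{\mathbb{F}_m}(F)^{\oplus2}$ — more precisely the nonvanishing of the extension class above, which is also exactly the point where $m\ge1$ is used; the rest, including the $\mP^n$ case and the MMP bookkeeping, is comparatively routine.
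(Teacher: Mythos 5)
Your proposal is correct, but it reaches the key isomorphisms by a genuinely different route than the paper. For (1), the paper first proves $H$-semistability of $\Omega^1_{\mP^n}(\log H)$ via the vanishing $H^0(\mP^n,\Omega^r_{\mP^n}(\log H)\otimes\mO(-x))=0$ for $x>-r$ (using the residue sequence and semistability of $\Omega^r_{\mP^n}$ and $\Omega^{r-1}_H$), and only then extracts the splitting $T_{\mP^n}(-\log H)\cong\mO(1)^{\oplus n}$ from numerical projective flatness plus simple connectedness; you instead establish the splitting directly from the Euler sequence and read off semistability and the Chern-class identity from it, which is more elementary and self-contained. For (2), the paper writes down two explicit nowhere-vanishing sections of $\Omega^1_{\mathbb{F}_m}(\log D)\otimes\sigma^*\mO_{\mP^1}(1)$ in four coordinate charts; you instead use the relative log tangent sequence $0\to\mO\to T_{\mathbb{F}_m}(-\log D)\to\sigma^*T_{\mP^1}\to0$ (legitimate here since $\sigma$ is smooth and $D$ is \'etale over the base --- the same log-smoothness device the paper itself uses in Proposition \ref{nef_log_tangent_SRCconj}), deduce fibrewise triviality, push forward, and identify the extension class in $H^1(\mP^1,\mO(-2))$ as $m$ times a generator. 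That nonvanishing is indeed the crux and is where $m\ge1$ enters, consistent with the split case $\mathbb{F}_0=\mP^1\times\mP^1$; a short \v{C}ech check in the two base charts (the obstruction cocycle is $m\xi\cdot(\eta\partial_\eta)$) confirms it, so your conclusion $T_{\mathbb{F}_m}(-\log D)\cong\sigma^*\mO_{\mP^1}(1)^{\oplus2}$ agrees with the paper's. The Mori-fibre-space and minimal-model discussion is essentially identical to the paper's (two extremal rays, only $\R_{+}[C_0]$ is $(K+D)$-negative, its contraction is the cone over the degree-$m$ rational normal curve). One small inaccuracy: since $D_1=c_*C_\infty$ appears with coefficient one, the pair $(X_1,D_1)$ is plt/log canonical rather than klt; this does not affect your argument, as the conclusion only needs that $X_1$ is singular for $m\ge2$, hence not $\mP^2$.
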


We recall some earlier works related to the structure theorem of a log smooth pair $(X,D)$.
In \cite[Theorem 1]{Tsuji88} and \cite[Theorem 3.1]{TianYau}, under the assumption that $K_X + D$ is nef, big, and ample modulo $D$, 
 if the equality holds in the Miyaoka-Yau inequality, then the universal cover of $X \setminus D$
is a unit ball in $\C^n$.
In \cite[Theorem A]{Deng20}, 
 if the natural log Higgs bundle $(\Omega_{X}^{1}(\log D) \oplus \mO_{X}, \theta)$ is $H$-polystable and the equality holds in the Bogomolov-Gieseker inequality, 
then $X \setminus D \cong \mathbb{B}^n / \Gamma$, 
where $\mathbb{B}^n$ is a unit ball in $\C^n$ and $\Gamma$ is a lattice of $PU(n,1)$.
In the above works, they studied the structure of $(X,D)$ when $ \Omega^{1}_{X}(\log D) $ is positive.
In \cite[Corollary 1.7]{DB20}, if $T_{X}(- \log D)$ is numerically flat, then $(X, D)$ is a toric fiber bundle over a finite \'etale quotient of an Abelian variety.
In this work, they studied the structure of $(X,D)$ when $T_{X}( - \log D) $ is flat.

Under the assumptions in Theorem \ref{uniform_n+1} or \ref{uniform_n}, 
we know that $T_{X}( - \log D) $ is nef.
Therefore, in this paper, we study the structure of $(X,D)$ when $T_{X}( - \log D) $ is (semi)positive.

\begin{rem}
After the author submitted this paper,
Druel established the structure theorem of a reduced log smooth pair $(X,D)$ such that the logarithmic tangent bundle $T_{X}(- \log D)$ is $H$-semistable and
\begin{equation*}
\left(c_2\bigl(T_X(- \log D)\bigr) - \frac{n-1}{2n} c_1\bigl(T_X(- \log D) \bigr)^2 \right)H^{n-2} =0
\end{equation*}
for some ample line bundle $H$.
For more details, we refer the reader to \cite{Dru21}.

\end{rem}

{\bf Acknowledgment.} 
The author would like to thank Prof. Osamu Fujino for answering his questions and pointing out his crucial mistakes in the first draft of this paper.
He also would like to thank Prof. Shin-ichi Matsumura for some discussions and helpful comments. 
He also would like to thank Prof. Chi Li for answering his questions.
He wishes to express his thanks to anonymous referees for some helpful comments.

The author was supported by the public interest incorporated foundation Fujukai and by Foundation of Research Fellows, The Mathematical Society of Japan.
This work was partly supported by Osaka City University Advanced Mathematical Institute (MEXT Joint Usage/Research Center on Mathematics and Theoretical Physics JPMXP0619217849).
This work was supported by the Research Institute for Mathematical Sciences, an International Joint Usage/Research Center located in Kyoto University.

\section{Preliminaries}
Throughout this paper, we work over the field $\mathbb{C}$ of complex numbers.
We denote by $\mathbb{N}_{>0}$ the set of positive integers and denote $\mathcal{H}om(\mathcal{F}, \mathcal{O}_{X})$ by $\mathcal{F}^{*}$ for any torsion-free coherent sheaf $\mathcal{F}$ on any variety $X$.
A pair $(X,D)$ is \textit{log smooth} if $X$ is a smooth projective variety and  $D$ is a simple normal crossing divisor on $X$.

First, we recall some notions of algebraic positivities of vector bundles and  torsion-free coherent sheaves. 

\begin{defn}
Let $X$ be a smooth projective variety.
 \begin{enumerate}
 \item \cite[Definition 1.9]{DPS94}  A vector bundle $E$ is {\it nef} if $\mathcal{O}_{\mathbb{P}(E) }(1)$ is nef on $\mathbb{P}(E) $.
 \item  \cite[Definition 1.17]{DPS94} A vector bundle $E$ is {\it numerically flat} if $E$ is nef and $c_{1}(E) = 0$.
 \item  \cite[Chapter 1. Proposition 4.22]{Kobayashi} A vector bundle $E$ is \textit{projectively Hermitian flat} if $E$ admits a smooth Hermitian metric $h$ such that
the Chern curvature tensor $\Theta_{E,h}$ satisfies $\Theta_{E,h} = \alpha {\rm{Id}}_{E}$ for some 2-form $\alpha$;
\item \cite[Chapter 1. Corollary 2.7]{Kobayashi} A vector bundle $E$ is \textit{projectively flat} if $E$ admits a  connection $\nabla$ such that
$\nabla^2 = \alpha {\rm{Id}}_{E}$ for some 2-form $\alpha$, 
equivalently, there exists a representation $$\rho : \pi_1 (X)  \rightarrow \mathbb{P}GL(r, \C)$$ such that $\mathbb{P}(E) \cong X_{\univ} \times_{\rho} \mathbb{P}^{r -1}$,
where $X_{\univ}$ is the universal cover of $X$.
 \item \cite[Definition 3.20]{Nakayama} \cite[Definition 7.1]{BDPP} \cite[Definition 3.1.1]{Fujnobook} A torsion-free coherent sheaf $\mathcal{E}$ is {\it pseudo-effective} ({\it weakly positive in the sense of Nakayama}) if 
 for any $a \in \mathbb{N}_{>0}$ and for any ample line bundle $A$ on $X$, there exists 
$b \in \mathbb{N}_{>0}$ such that $\Sym^{ab}( \mathcal{E}  ) ^{**} \otimes A^{ b}$ is generically generated by global sections.
 \end{enumerate}
 \end{defn}
 
Our definition of pseudo-effective vector bundles is stronger than this definition as in \cite[Example 6.1.23]{Laz04}. In fact, our definition requires that  the image of the non-nef locus of $\mathcal{O}_{\mathbb{P}(E) }(1)$ is properly contained in $X$ in addition to this condition (cf. \cite[Proposition 2.2]{HIM21}).

Second, we recall the definition of numerically projectively flat.

\begin{thmdefn}
[{\cite[Chapter 4. Theorem 4.1]{Nakayama} \cite[Definition 4.1]{LOY20}}]
\label{num_proj_def}
\text{}

Let $X$ be a smooth projective variety of dimension $n \ge 2$ and $\mathcal{E}$ be a rank $r$ reflexive coherent sheaf on $X$. 
$\mathcal{E}$ is said to be \textit{numerically projectively flat} if it satisfies one of the equivalent following conditions:
\begin{enumerate}
\item $\mathcal{E}$ is locally free and the $\Q$-twisted vector bundle $\mathcal{E} \langle \frac{\det \mathcal{E}^{*}}{r}\rangle $ is nef (for the definition of a $\Q$-twisted coherent sheaf, see \cite[Section 6.2.A]{Laz04}).
\item $\mathcal{\mathcal{E}}$ is $H$-semistable
and 
$$
\left( c_2(\mathcal{E}) - \frac{r-1}{2r}c_1(\mathcal{E})^{2}\right) H^{n-2}=0
$$
holds for some ample line bundle $H$.
\item $\mathcal{E}$ is locally free and there exists a filtration of subbundles:
$$
0=:E_0 \subset E_1 \subset \cdots \subset E_l:= \mathcal{E}
$$
such that $G_i := E_i / E_{i-1}$ is a projectively Hermitian flat vector bundle  and
$c_1(G_i)/ {\rm rank} G_i = c_1(E)/r \in H^{1,1}(X,\R)$ holds for any $i = 1, \ldots, l$.
\end{enumerate}
\end{thmdefn}

By \cite[Lemma 4.3]{LOY20}, if $\mathcal{E}$ is numerically projectively flat, then $\mathcal{E}^{*}$ is so. 
We use the following lemma in the proof of Theorem \ref{uniform_n+1}.
\begin{lem}
\label{simply_connected}
If $X$ is simply connected and $E$ is numerically projectively flat, 
then there exists a line bundle $L$ such that $E \cong L^{\oplus r}$, where $r$ is a rank of $E$. 
\end{lem}
\begin{proof}
By \cite[Theorem 1.7]{LOY20}, $E$ is projectively flat, and thus $\mathbb{P}(E) \cong \mathbb{P}^{r -1}$ from \cite[Chapter 1. Corollary 2.7]{Kobayashi} and  simply connectedness of $X$, which completes the proof.
\end{proof}

\section{Proofs}

\begin{proof}[Proof of Theorem \ref{uniform_n+1}]
By Theorem-Definition \ref{num_proj_def}, 
$\mathcal{E}_L \langle  \frac{\det \mathcal{E}_{L}^{*}}{n+1} \rangle$ is nef.
Hence $T_X(- \log D) \langle  \frac{K_{X}+D}{n+1} \rangle$ is also nef by (\ref{can_ext_exact}).
Since $-(K_X+D)$ is nef, $T_X(- \log D)$ is nef by \cite[Lemma 4.3]{LOY20}.
Since the inclusion map $T_X(- \log D) \rightarrow T_X$ is generically surjective, by \cite[Lemma 3.1.12 (ii)]{Fujnobook},
$T_X$ is pseudo-effective. 
By \cite[Theorem 1.1]{HIM21}, there exists a smooth morphism $f : X \rightarrow Y$ such that $Y$ is a finite \'etale quotient of an Abelian variety and any fiber $F$ of $f$ is rationally connected.

\begin{claim}
\label{log_deformation}
$f : X \rightarrow Y$, as well as the restriction of $f$ to $D$, is isomorphic to a projection from a product space,
i.e. for any $x \in X$, there exist an open neighborhood $U$ of $x$ and an isomorphism $\phi : U \rightarrow V \times W$, where $V:=f(U)$ and $ W:=U \cap f^{-1}(f(x))$,
such that the following diagram
\begin{equation*}
\xymatrix@C=25pt@R=20pt{
U  \ar@{->}[rd]_{f}  \ar@{->}[rr]^{\phi \,\,\,}& &  V \times W \ar@{->}[ld]^{\text{projection}}\\   
&V& \\}
\end{equation*}
is commutative and $\phi(U \cap D) = V \times (W \cap D)$.
In particular, $f : (X,D) \rightarrow Y$ is a logarithmic deformation in the sense of \cite[Definition 3]{Kawa78}. 
\end{claim}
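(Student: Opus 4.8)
The plan is to build, locally over $Y$, a holomorphic Ehresmann connection for $f$ whose horizontal leaves are tangent to $D$, and then to integrate it to a local product decomposition of the pair $(X,D)$; this is the logarithmic refinement of the local triviality arguments for rationally connected fibrations over quotients of Abelian varieties in the spirit of \cite{HIM21}. Since the assertion is local on $X$, I would first base change along the finite \'etale cover $A\to Y$ by an Abelian variety and replace $X,D$ by their preimages: this preserves smoothness of $f$, the simple normal crossing property, rational connectedness of the fibres, and nefness of $T_X(-\log D)$, and at the end the conclusion transports back to $Y$ because $A\to Y$ is a local biholomorphism. So assume $Y=A$ is an Abelian variety, whence $f^*T_A\cong\mathcal{O}_X^{\oplus\dim A}$.

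First I would check that $(X,D)\to A$ is log smooth, i.e. that every stratum of $D$ is smooth over $A$, equivalently that $T_X(-\log D)\to f^*T_A$ is surjective. If it were not surjective in codimension one, then $Q:=T_X(-\log D)/T_{X/A}(-\log D)\hookrightarrow f^*T_A=\mathcal{O}_X^{\oplus\dim A}$ would be a torsion-free quotient of the nef bundle $T_X(-\log D)$ with $c_1(Q)=-E$ for a nonzero effective divisor $E$, which is impossible since a torsion-free quotient of a nef bundle has $c_1\cdot H^{n-1}\ge 0$ while $-E\cdot H^{n-1}<0$; the remaining codimension $\ge 2$ degeneracy I would rule out similarly after restriction to suitable subvarieties. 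Granting log smoothness, one has the locally free relative logarithmic Euler sequence
$$
0\longrightarrow T_{X/A}(-\log D)\longrightarrow T_X(-\log D)\xrightarrow{\ df\ }f^*T_A\cong\mathcal{O}_X^{\oplus\dim A}\longrightarrow 0 .
$$

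The heart of the matter --- and the step I expect to be the main obstacle --- is the vanishing $H^1\bigl(F,T_F(-\log D|_F)\bigr)=0$ for every fibre $F$, which by cohomology and base change yields $R^1f_*\,T_{X/A}(-\log D)=0$. What is available is that $T_F(-\log D|_F)$ is nef (being the restriction of $T_X(-\log D)$), that $-(K_F+D|_F)=\det T_F(-\log D|_F)$ is nef, and that $F$ is rationally connected; the task is to deduce that the log pair $(F,D|_F)$ is infinitesimally rigid. I would isolate this as a separate lemma and approach it through the long exact sequences attached to $0\to T_F(-\log D|_F)\to T_F\to\mathcal{Q}\to 0$ with $\mathcal{Q}$ supported on $D|_F$, exploiting the nefness hypotheses together with the rationally connected structure, possibly via a Bott/Akizuki--Nakano-type vanishing.

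Granting $R^1f_*\,T_{X/A}(-\log D)=0$, the rest is routine. For any $x\in X$ with $y=f(x)$, choose a polydisc $V\subseteq A$ around $y$; since $f^{-1}(V)$ is proper over the Stein manifold $V$, the obstruction to lifting each coordinate vector field $\partial/\partial t_i$ on $V$ to $f^{-1}(V)$ lies in $H^1\bigl(f^{-1}(V),T_{X/A}(-\log D)\bigr)=H^0\bigl(V,R^1f_*T_{X/A}(-\log D)\bigr)=0$, so we obtain holomorphic vector fields $v_1,\dots,v_{\dim A}$ on $f^{-1}(V)$, each tangent to $D$ because it is a section of $T_X(-\log D)$, each projecting to $\partial/\partial t_i$. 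Their flows (defined for suitable times once $V$ is small) are biholomorphisms covering the translations on $V$ and preserving $D$; composing them in the coordinate directions gives a biholomorphism $\Phi\colon V\times f^{-1}(y)\xrightarrow{\ \sim\ }f^{-1}(V)$ over $V$ with $\Phi\bigl(V\times(D\cap f^{-1}(y))\bigr)=D\cap f^{-1}(V)$ and $\Phi(0,\cdot)=\mathrm{id}$. Shrinking to a polydisc neighbourhood $W$ of $x$ inside $f^{-1}(y)$ and putting $U:=\Phi(V\times W)$, $\phi:=(\Phi|_{V\times W})^{-1}$, one reads off $f(U)=V$, $U\cap f^{-1}(f(x))=W$, the commutative diagram, and $\phi(U\cap D)=V\times(W\cap D)$. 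This is exactly the asserted local structure, and in particular $f\colon(X,D)\to Y$ is a logarithmic deformation in the sense of \cite{Kawa78}.
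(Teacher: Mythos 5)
Your proposal aims at a substantially stronger statement than the one asserted, and the extra strength is exactly where the gap lies. The claim only says that $f$ is, \emph{locally on the source} $X$, a projection compatible with $D$: note that $W=U\cap f^{-1}(f(x))$ is an open piece of the fibre inside a neighbourhood $U$ of $x$, not the whole fibre. This is precisely the statement that $(X,D)\to Y$ is a log smooth family, i.e.\ that $T_X(-\log D)\to f^*T_Y$ is everywhere surjective; once that surjectivity is known, the local product chart follows from the implicit function theorem alone, with no cohomology involved. You instead aim for triviality of $f^{-1}(V)$ over a whole polydisc $V\subseteq Y$, which forces you through the fibrewise vanishing $H^1\bigl(F,T_F(-\log D|_F)\bigr)=0$ --- a step you explicitly leave unproved (``the main obstacle''). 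That vanishing is a genuine gap: it is not a formal consequence of nefness of $T_F(-\log D|_F)$ together with rational connectedness of $F$, and in the paper it only becomes available later, after the case division, in the case $K_F+D|_F\equiv 0$ where $T_F(-\log D|_F)$ is actually trivial (and there it is used for isotriviality via Kawamata, not for the present claim). As written, your argument therefore does not establish the claim, and it routes an elementary local statement through an unnecessary and unestablished global lemma.

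The part of your argument that is actually needed --- surjectivity of $T_X(-\log D)\to f^*T_Y$ --- is close to the paper's mechanism but is itself only sketched. The paper's route: the top exterior power of the dual map $f^*\Omega^1_Y\to\Omega^1_X(\log D)$ is a global section of $\wedge^{\dim Y}\Omega^1_X(\log D)\otimes f^*\det T_Y$, the dual of the nef bundle $\wedge^{\dim Y}T_X(-\log D)\otimes f^*\det\Omega^1_Y$ (with $\det\Omega^1_Y$ numerically trivial), hence nowhere vanishing by \cite[Proposition 1.2]{CP91}; an explicit computation with the logarithmic Jacobian then shows that non-vanishing at $x$ forces a non-degenerate $m\times m$ minor in the coordinates transverse to $D$, and the chart $\phi=(f_1,\dots,f_m,z_{m+1},\dots,z_n)$ gives the claim directly. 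Your codimension-one $c_1$ argument handles divisorial degeneracy, but ``the remaining codimension $\ge 2$ degeneracy I would rule out similarly'' needs to be made precise --- for instance by restricting to a curve $C$ through a potential degeneracy point and noting that the image of $T_X(-\log D)|_C$ in $\mathcal{O}_C^{\oplus m}$ has degree both $\ge 0$ (quotient of a nef bundle) and $\le 0$ (full-rank subsheaf of a trivial bundle), hence equals $\mathcal{O}_C^{\oplus m}$. If you replace the $H^1$-based trivialization by this surjectivity plus the implicit function theorem, you recover the paper's proof.
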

\begin{proof}[Proof of Claim \ref{log_deformation}]
From the differential map $f^{*}\Omega_{Y}^{1}\rightarrow \Omega^1_X$ of $f$,
we obtain an injective morphism $s : f^{*}\Omega^1_Y \rightarrow \Omega^1_{X}(\log D)$.
Hence we obtain a morphism
$$
\wedge^{\dim Y} s: f^{*}\det \Omega^1_Y \rightarrow \wedge^{\dim Y} \Omega^1_{X}(\log D).$$
Since $\wedge^{\dim Y} s $ belongs to $H^{0}(X, \wedge^{\dim Y} \Omega^1_{X}(\log D) \otimes f^{*}\det T_Y)$ and 
$\wedge^{\dim Y} T_{X}(-\log D) \otimes f^{*}\det \Omega^{1}_Y$ is nef, 
$\wedge^{\dim Y} s$ has no zero point on $X$ by \cite[Proposition 1.2 (12)]{CP91}.
 
 Fix $x \in X$.
 Let $U$ be a neighborhood of $x$, $(z_1, \ldots, z_n)$ be a local coordinate on $U$, and $\Delta$ be a unit disk of $\C$.
We may regard $U$ as $\Delta^n$ and regard $x \in U$ as an origin.
We may assume that $D\cap U = \{ z_{n-r+1} \cdots z_n =0\}$.
 Set $m := \dim Y $ and $V:=f(U)$.
Let $(w_{1}, \ldots, w_{m})$ be a local coordinate on $V$.
Then $f$ is written in $U$ as follows: 
$$
\begin{array}{cccc}
f : &U              & \rightarrow& V         \\
&(z_1, \ldots, z_n)               &    \mapsto  &(f_1(z), \ldots, f_m(z)),\\
\end{array}
$$
 where $f_1(z), \ldots, f_m(z)$ are holomorphic functions on $U$.
Now we define the $m \times n$ matrix $J$ of holomorphic functions as follows:
 $$J := 
    \begin{pmatrix}
    \pdrv{f_1}{z_1} & \cdots & \pdrv{f_1}{z_{n-r}} & \pdrv{f_1}{z_{n-r+1}} z_{n-r+1} & \cdots & \pdrv{f_1}{z_{n}} z_{n}  \\
    \vdots & &\vdots &\vdots & & \vdots \\
    \pdrv{f_m}{z_1} & \cdots & \pdrv{f_m}{z_{n-r}} & \pdrv{f_m}{z_{n-r+1}} z_{n-r+1} & \cdots & \pdrv{f_m}{z_{n}} z_{n}\\
    \end{pmatrix}. $$
 Set $\mathcal{I} := \{ (i_1 ,  i_2 , \ldots ,i_m ) \in \N^m \, |\,   1\le i_1 < i_2 < \cdots <i_m \le n \}$. 
 For any $I=(i_1, \ldots, i_m ) \in \mathcal{I}$, 
we define $J_I := ( J_{ k, i_{l} })_{1 \le k,l \le m}$, where $J_I$ is an $m \times m$ matrix of holomorphic functions.
 Then we have 
 $$
 f^{*}(dw_1 \wedge \cdots \wedge dw_m)
 =
 \sum_{I=(i_1, \ldots, i_m ) \in \mathcal{I}} \det(J_I) \, \delta_{i_1} \wedge \cdots \wedge \delta_{i_m}
 $$
 on $U$, 
 where $\delta_i $ is defined by 
 \[
  \delta_i := \begin{cases}
    d z_i & (1 \le i \le n-r) \\
    \frac{d z_i}{z_i} & (n-r+1 \le i \le n).
  \end{cases}
\]
Since $\wedge^{\dim Y} s$ has no zero point, 
$ f^{*}(dw_1 \wedge \cdots \wedge dw_m)$ also has no zero point at $x=(0,\ldots ,0)$.
Therefore $ m \le n-r$ and there exists $I_0 = (i_1 ,  i_2 , \ldots ,i_m ) \in \mathcal{I}$ such that
$J_{I_0}(x) \neq 0$ and $1\le i_1 < i_2 < \cdots <i_m \le n-r$.
We may assume that $I_0=(1,2,\ldots ,m)$ and $J_{I_0}$ has no zero point on $U$.
Hence we define a morphism $\phi $ as follows: 
$$
\begin{array}{cccc}
\phi : &U              & \rightarrow& V \times \Delta^{n-m}       \\
&(z_1, \ldots, z_n)               &    \mapsto  &(f_1(z), \ldots, f_m(z), z_{m+1}, \ldots, z_{n}), \\
\end{array}
$$
then $\phi$ is isomorphism and $\phi(U \cap D) = V \times ( \Delta^{n-m} \cap D)$.
\end{proof}

Hence any fiber $F$ of $f$ intersects $D$ transversally.
Set $D_F := D|_{F}$, then $(K_{X/Y} + D)|_{F} = K_F + D_F$.
By  the argument of \cite[Lemma 2.13 (2.13.1)]{KK} (cf. \cite[Properties 2.3 (a)]{Viehweg}),
\begin{equation}
\label{log_tangent_exact}
0   \rightarrow T_{F}(- \log D_F) \rightarrow T_{X}( - \log D)|_F \rightarrow N_{F/X}\cong \mathcal{O}_{F}^{\oplus \dim Y} \rightarrow 0.
\end{equation}
In particular, $T_{F}(-\log D_F)$ is nef by \cite[Proposition 1.2 (8)]{CP91}.

First, we consider the case where $K_F + D_F \equiv 0$ for any fiber $F$ of $f$.  
In this case, 
$T_{F}(-\log D_F)$ is numerically flat.
Since $F$ is rationally connected, $F$ is simply connected by \cite[Corollary 4.18 (c)]{Deb04}. Hence $T_{F}(-\log D_F)$ is trivial.
Hence $F$ is a smooth toric variety with a boundary divisor $D|_{F}$ by \cite[Corollary 1]{win} and  \cite[Chapter 1]{DB20} (cf. \cite[Theorem 1.2]{BMSZ18} and \cite[Theorem 4.5]{MZ19}).
From $H^{1}(F, T_{F}(-\log D_F)) =0$ by \cite[Chapter 4. Corollary 4.18]{Deb04}, $f : (X,D) \rightarrow Y$ is locally trivial for the analytic topology by \cite[Corollary 2]{Kawa78}.

Second, we consider the case where there exists a fiber $F$ of $f$ with $K_F + D_F \not \equiv 0$.
Since $T_{X}( - \log D) \langle  \frac{K_X + D}{n+1} \rangle$ is nef, 
$T_{X}( - \log D) |_F \langle  \frac{K_F + D_F}{n+1} \rangle$ is also nef by \cite[Theorem 6.2.12. (i)]{Laz04}.
If $\dim Y \neq0$, then $ \mathcal{O}_{F}^{\oplus \dim Y}\langle  \frac{K_F + D_F}{n+1} \rangle$ is nef by (\ref{log_tangent_exact}) and \cite[Theorem 6.2.12. (i)]{Laz04}, 
hence $K_F + D_F \equiv 0$ since $T_{F}(-\log D_F)$ is nef, which is contrary to the assumption.
Therefore $\dim Y=0$ and $X$ is rationally connected.
By Lemma \ref{simply_connected}, there exists a Cartier divisor $B$ on $X$ such that
$\mathcal{E}_L \cong \mO_{X}(B)^{\oplus (n+1)}$.
Thus $-(K_X+D) \sim (n+1)B$. 
Since $K_X+D$ is not nef, by \cite[Theorem 2.1]{FM20}, $X \cong \mathbb{P}^{n}$ and $D =0$. 
\end{proof}


\begin{proof}[Proof of Theorem \ref{uniform_n}]
By the proof of Theorem \ref{uniform_n+1}, if $T_X(- \log D)$ is numerically projectively flat, then (1) or (2) of Theorem \ref{uniform_n} holds.
We show that, if (2) of Theorem \ref{uniform_n} holds and $(X,D) $ is a Mori fiber space, then $(X,D)$ is isomorphic to $(\mathbb{P}^n, H_{\mP^n})$.

We take $(X,D)$ and $B$ as in (2) of Theorem \ref{uniform_n} and we assume that $\phi : (X,D) \rightarrow Z$ is a Mori fiber space.
We show that $\dim Z =0$.
To obtain a contradiction, assume that $\dim Z \neq 0$.
Let $F$ be a general fiber of $\phi$.
Notice that $\dim F \le n-1$.
Set $D_F := D|_{F}$, then
$$-(K_F+D_F)=-(K_{X/Z} + D)|_{F} \sim n B|_{F}.$$
Since $B|_{F}$ is ample, we obtain $F\cong \mathbb{P}^{n-1}$, $\mO_{F}(B|_{F})=\mathcal{O}_{\mathbb{P}^{n-1}}(1)$, and $ D_F =0$ by \cite[Theorem 2.1]{FM20}.
By  the argument of \cite[Lemma 2.13 (2.13.1)]{KK} (cf. \cite[Properties 2.3 (a)]{Viehweg}), we obtain the following exact sequence:
$$
0 \rightarrow 
\mathcal{O}_{F}^{\oplus \dim Z} \rightarrow 
\Omega_{X}^{1}(\log D)|_F 
\rightarrow \Omega_{F}^{1}(\log D_F) = \Omega_{F}^{1}
\rightarrow 0.
$$
Hence we obtain a group homomorphism
$$
H^1(F, \Omega_{X}^{1}(\log D)|_F ) 
\rightarrow 
H^1(F, \Omega_{F}^{1})
\rightarrow 
H^2(F, \mathcal{O}_{F})^{\oplus \dim Z}.
$$
From $n \ge 2$, 
$$H^1(F, \Omega_{X}^{1}(\log D)|_F ) 
=H^1(F, \mO_{X}(-B)^{\oplus n}|_{F})
=H^1(\mathbb{P}^{n-1},\mathcal{O}_{\mathbb{P}^{n-1}}(-1))^{\oplus n}=0$$
and $H^2(F, \mathcal{O}_{F})^{\oplus \dim Z}=H^2(\mathbb{P}^{n-1}, \mathcal{O}_{\mathbb{P}^{n-1}})^{\oplus \dim Z}=0$.
Hence $H^1(F, \Omega_{F}^{1})=0$, 
which is impossible since the Picard number of $F$ is one.

Hence $(X,D)$ is log Fano.
If $D=0$, then we obtain
$$
H^1(X, \Omega_{X}^{1})=H^1(X, \mO_{X}(-B))^{\oplus n}
=H^1(X, \mO_{X}(K_X +(n-1)B))^{\oplus n}
=0, 
$$
which is impossible since the Picard number of $X$ is non zero.
Hence $D \neq 0$.
Since the log Fano index of $(X,D)$ is more than or equal to $n$, by \cite[Proposition 4.1]{Fujita12}, 
$(X,D)\cong (\mathbb{P}^n, H_{\mP^{n}})$.
\end{proof}

\begin{rem}
\label{DLB_result}
We give an another proof of \cite[Corollary 1.7]{DB20}.
If $T_X(- \log D)$ is numerically flat, then by Claim \ref{log_deformation}, $(X,D)$ is a logarithmic deformation over $Y$ such that $Y$ is a finite \'etale quotient of an Abelian variety.
By (\ref{log_tangent_exact}), $T_{F}(-\log D_F)$ is numerically flat for any fiber $F$.
Therefore, by the same argument of Theorem \ref{uniform_n+1}, $(F, D_F)$ is a toric pair and $(X,D)$ is a toric fiber bundle over $Y$.
\end{rem}

\section{Examples}
We recall the first Chern class and the second Chern class of a logarithmic tangent bundle.
Let $X$ be a smooth variety and $D = \sum_{i=1}^{l} D_i$ be a simple normal crossing divisor on $X$.
By \cite[Example 3.5]{GT}, we have
$
c_1\bigl(T_X(- \log D)\bigr)=-(K_X + D)
$
and
\begin{align*}
\begin{split}
c_2\bigl(T_X(- \log D)\bigr)
&= c_2(T_X) + K_X D +D^2 - \sum_{i<j}D_i D_j.\\
  \end{split}
  \end{align*}

\subsection{Projective spaces}
\label{projective_space}

In Subsection \ref{projective_space}, 
let $n$ be a positive integer with $n \ge 2$ and 
$H$ be a hyperplane of $\mP^n$.

\begin{lem}
\label{projective_n}
$-(K_{\mP^n} + H)$ is nef and 
the equality holds in the Bogomolov-Gieseker inequality for $T_{\mP^n}(- \log H)$:
\begin{equation}
\label{projective_n_BG}
\left(c_2\bigl(T_{\mP^n}(- \log H)\bigr) - \frac{n-1}{2n} c_1\bigl(T_{\mP^n}(- \log H)\bigr)^2 \right)H^{n-2} =0.
\end{equation}
\end{lem}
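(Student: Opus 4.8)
The plan is to verify the two assertions of Lemma \ref{projective_n} by direct computation, using the Chern class formulas for the logarithmic tangent bundle recalled just above. First I would handle nefness of $-(K_{\mP^n}+H)$. On $\mP^n$ we have $K_{\mP^n} \sim -(n+1)H$, so $-(K_{\mP^n}+H) \sim nH$, which is ample (hence nef) since $n \ge 2 > 0$. This also records that $c_1\bigl(T_{\mP^n}(-\log H)\bigr) = -(K_{\mP^n}+H) = nH$, so that $c_1\bigl(T_{\mP^n}(-\log H)\bigr)^2 = n^2 H^2$.

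Next I would compute $c_2\bigl(T_{\mP^n}(-\log H)\bigr)$ using the displayed formula $c_2\bigl(T_X(-\log D)\bigr) = c_2(T_X) + K_X D + D^2 - \sum_{i<j} D_i D_j$ with $X = \mP^n$ and $D = H$ a single smooth irreducible divisor (so the last sum is empty). I would use the standard fact $c_2(T_{\mP^n}) = \binom{n+1}{2} H^2$, together with $K_{\mP^n} \cdot H = -(n+1)H^2$ and $H^2 = H^2$ (all intersection-theoretic classes being multiples of $H^2$ in $H^4(\mP^n,\Z)$). This gives
\begin{align*}
c_2\bigl(T_{\mP^n}(-\log H)\bigr) &= \binom{n+1}{2}H^2 - (n+1)H^2 + H^2 \\
&= \left( \frac{n(n+1)}{2} - n \right) H^2 = \frac{n(n-1)}{2} H^2.
\end{align*}
Then the bracketed expression in \eqref{projective_n_BG} becomes $\left( \frac{n(n-1)}{2} - \frac{n-1}{2n} \cdot n^2 \right) H^2 = \left( \frac{n(n-1)}{2} - \frac{n(n-1)}{2} \right) H^2 = 0$, and intersecting with $H^{n-2}$ gives $0$, as claimed.

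I do not anticipate a serious obstacle here: the only inputs are $K_{\mP^n} = -(n+1)H$, $c_2(T_{\mP^n}) = \binom{n+1}{2}H^2$, and the Chern class formula for the log tangent bundle, all of which are standard or recalled in the text. The one point requiring mild care is to make sure the formula for $c_2\bigl(T_X(-\log D)\bigr)$ is applied with $D$ irreducible so that the correction term $\sum_{i<j}D_iD_j$ vanishes, and to keep track that all classes live in the one-dimensional space spanned by $H^2$ so the final identity really is numerical equality (indeed equality of cycle classes), not merely after intersecting with $H^{n-2}$. If one wants, one can alternatively deduce nefness and semistability of $T_{\mP^n}(-\log H)$ from the Euler-type sequence for the log tangent bundle of a toric pair, but the brute-force Chern class computation above is the shortest route to the stated lemma.
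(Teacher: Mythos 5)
Your computation is correct and coincides with the paper's own proof: both use $K_{\mP^n}\sim -(n+1)H$, $c_2(T_{\mP^n})=\tfrac{n(n+1)}{2}H^2$, and the displayed formula for $c_2\bigl(T_X(-\log D)\bigr)$ to get $c_1\bigl(T_{\mP^n}(-\log H)\bigr)=nH$ and $c_2\bigl(T_{\mP^n}(-\log H)\bigr)=\tfrac{n(n-1)}{2}H^2$, from which nefness and the equality follow immediately. No issues.
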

\begin{proof}
From $c_1(T_{\mP^n}) = (n+1) H$ and $c_2(T_{\mP^n}) = \frac{n(n+1)}{2} H^2$, we have
$
c_1\bigl(T_{\mP^n}(- \log H)\bigr)=nH
$ and
$$
c_2\bigl(T_{\mP^n}(- \log H)\bigr)= \left(\frac{n(n+1)}{2} - (n+1)+ 1  \right)H^2
= \frac{n(n-1)}{2}H^2.
$$
Hence $-(K_{\mP^n} + H)$ is nef and Equality (\ref{projective_n_BG}) holds.
\end{proof}

\begin{prop}
\text{}
\begin{enumerate}
\item For any $1\le r \le n$, if $-r < x$, then $H^{0}(\mathbb{P}^n, \Omega_{\mP^n}^{r}(\log H) \otimes \mO_{\mP^n}(-x)) = 0$.
\item $T_{\mathbb{P}^n}(- \log H)$ is numerically projectively flat.
In particular, $T_{\mathbb{P}^n}(- \log H) \cong \mO_{\mP^n}(H)^{\oplus n}$ holds. 
\end{enumerate}
\end{prop}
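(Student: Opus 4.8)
The proposition has two parts. For (1), I want to compute the cohomology $H^0(\mathbb{P}^n, \Omega^r_{\mathbb{P}^n}(\log H)\otimes \mathcal{O}_{\mathbb{P}^n}(-x))$ and show it vanishes for $x>-r$. The natural tool is the logarithmic residue sequence
\[
0 \to \Omega^r_{\mathbb{P}^n} \to \Omega^r_{\mathbb{P}^n}(\log H) \to \Omega^{r-1}_H \to 0,
\]
twisted by $\mathcal{O}_{\mathbb{P}^n}(-x)$. Since $H\cong\mathbb{P}^{n-1}$, I would reduce to known vanishing for $\Omega^r_{\mathbb{P}^n}(-x)$ and $\Omega^{r-1}_{\mathbb{P}^{n-1}}(-x)$ (restricted), using Bott's formula for the cohomology of twisted differentials on projective space. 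An alternative, perhaps cleaner, approach is to use the explicit description $T_{\mathbb{P}^n}(-\log H)\cong \mathcal{O}_{\mathbb{P}^n}(H)^{\oplus n}$ — but that is precisely what (2) asserts, so I should prove (1) first and independently, then feed it into (2). So for (1) I would induct on $r$ (or on $n$), invoking Bott vanishing for the two outer terms of the residue sequence and the long exact sequence in cohomology to squeeze out the middle.

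**Part (2).** By Lemma \ref{projective_n}, $-(K_{\mathbb{P}^n}+H)$ is nef and the Bogomolov-Gieseker equality (\ref{projective_n_BG}) holds; and $T_{\mathbb{P}^n}(-\log H)$ is $H$-semistable. (Semistability should follow either from the hypotheses of Theorem \ref{uniform_n} being satisfiable here, or more directly: a direct sum of copies of a line bundle is semistable, so once we know the splitting we are done — but again that is circular, so I would instead argue semistability from nefness plus the numerical equality via Theorem-Definition \ref{num_proj_def}, or note that $T_{\mathbb{P}^n}(-\log H)$ is nef with $c_1 = nH$ and slope considerations force semistability.) Granting $H$-semistability and (\ref{projective_n_BG}), Theorem-Definition \ref{num_proj_def} gives that $T_{\mathbb{P}^n}(-\log H)$ is numerically projectively flat. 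Since $\mathbb{P}^n$ is simply connected, Corollary \ref{simply_connected} then yields a line bundle $L$ with $T_{\mathbb{P}^n}(-\log H)\cong L^{\oplus n}$; taking determinants, $L^{\otimes n}\cong \mathcal{O}_{\mathbb{P}^n}(nH)$, and since $\mathrm{Pic}(\mathbb{P}^n)\cong\mathbb{Z}$ is torsion-free this forces $L\cong\mathcal{O}_{\mathbb{P}^n}(H)$.

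**Where part (1) is actually needed, and the main obstacle.** The subtlety is that Corollary \ref{simply_connected} as quoted requires numerical projective flatness, which I get from \ref{num_proj_def} as above; but to apply \ref{num_proj_def}(2)$\Rightarrow$(1) I need $H$-semistability of $T_{\mathbb{P}^n}(-\log H)$, and the honest way to establish that — without circularity — may well go through part (1): the vanishing $H^0(\mathbb{P}^n,\Omega^r_{\mathbb{P}^n}(\log H)\otimes\mathcal{O}(-x))=0$ for $x>-r$ controls the possible destabilizing subsheaves (a saturated subsheaf of rank $r$ and slope $>1$ relative to $H$, after taking $\wedge^r$, would produce a nonzero section of $\Omega^r_{\mathbb{P}^n}(\log H)\otimes\mathcal{O}(-x)$ with $x$ too large). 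So the logical order is: prove (1) by Bott vanishing and the residue sequence; deduce $H$-semistability of $T_{\mathbb{P}^n}(-\log H)$ from (1); combine with Lemma \ref{projective_n} and Theorem-Definition \ref{num_proj_def} to get numerical projective flatness; and finish with Corollary \ref{simply_connected} and the Picard-group computation. I expect the main obstacle to be bookkeeping in part (1) — getting the Bott-formula indices right across the residue sequence and making the induction on $r$ (and on $n$, for the restriction to $H\cong\mathbb{P}^{n-1}$) close cleanly — rather than anything in part (2), which is essentially a formal consequence of the machinery already set up.
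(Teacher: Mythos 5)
Your proposal follows essentially the same route as the paper: part (1) via the residue sequence $0\to\Omega^r_{\mP^n}\to\Omega^r_{\mP^n}(\log H)\to\Omega^{r-1}_H\to 0$, and part (2) by noting that a destabilizing subsheaf of rank $r$ and slope $>-1$ would produce a nonzero section of $\Omega^r_{\mP^n}(\log H)\otimes\mO_{\mP^n}(-x)$ with $x>-r$, contradicting (1), and then applying Theorem-Definition \ref{num_proj_def} and Corollary \ref{simply_connected} together with the determinant computation. The only real difference is in a sub-step: for the vanishing of $H^0$ of the two outer terms of the residue sequence you invoke Bott's formula, whereas the paper uses $H$-semistability of $\Omega^r$ on projective space and a slope bound; both are adequate.
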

\begin{proof}
The proof is the same as \cite[Lemma 2.1 and Proposition 5.2]{CI}.

(1). Fix $1\le r \le n$.
For any torsion-free coherent sheaf $\mathcal{F}$ on $\mP^n$, 
the slope $\mu_{H}(\mathcal{F})$ with respect to $H$ is defined by
$\mu_{H}(\mathcal{F}) := \frac{c_1(\mathcal{F})H^{n-1}}{\text{rank}\mathcal{F}}$.
By \cite[Properties 2.3 (b)]{Viehweg},
\begin{equation}
\label{Viehweg_book}
0  \rightarrow \Omega_{\mP^n}^{r}
\rightarrow \Omega_{\mP^n}^{r}(\log H)
\rightarrow \Omega_{H}^{r-1}
\rightarrow 0.
\end{equation}
Since $\Omega_{\mP^n}^{r}$ is $H$-semistable and $\mu_{H}(\Omega_{\mP^n}^{r})=\frac{-r(n+1)}{n}$, if $\frac{-r(n+1)}{n} < x$, then
$H^{0}(\mathbb{P}^n, \Omega_{\mP^n}^{r} \otimes \mO_{\mP^n}(-x)) = 0$.
By the same argument, if $\frac{-(r-1)n}{n-1} < x$, then $H^{0}(H, \Omega_{H}^{r-1}\otimes \mO_{H}(-x)) = 0$.
Therefore by (\ref{Viehweg_book}), if $-r < x$, then $H^{0}(\mathbb{P}^n, \Omega_{\mP^n}^{r}(\log H) \otimes \mO_{\mP^n}(-x)) = 0$.

(2). By Theorem-Definition \ref{num_proj_def} and Lemma \ref{projective_n}, it is enough to show that $\Omega^{1}_{\mathbb{P}^n}(\log H)$ is $H$-semistable.
To obtain a contradiction, assume that there exists a rank $r$ torsion-free coherent sheaf $\mathcal{F} \subset \Omega^{1}_{\mathbb{P}^n}(\log H)$ with $\mu_{H}(\mathcal{F})> \mu_{H}(\Omega_{\mP^n}^{1}(\log H))=-1$.
Let $x$ be a real number with $\det \mathcal{F} \cong \mO_{\mP^n}(x)$.
By the assumption, $\mu_{H}(\mathcal{F})=\frac{x}{r} > -1$.
From $\det \mathcal{F} \subset \Omega_{\mathbb{P}^n}^{r}(\log H)$, we have 
$H^{0}(\mathbb{P}^n, \Omega_{\mP^n}^{r}(\log H) \otimes \mO_{\mP^n}(-x)) \neq 0$, 
contrary to (1).
\end{proof}

\subsection{Hirzebruch surfaces}
\label{hirzebruch}

In Subsection \ref{hirzebruch}, 
let $m$ be a positive integer, $\mathbb{F}_m :=\mathbb{P}(\mathcal{O}_{\mP^1} \oplus \mathcal{O}_{\mP^1}(-m))$ be the $m$-th Hirzebruch surface, and $\sigma : \mathbb{F}_m \rightarrow \mP^1$ be the ruling of $\mathbb{F}_m$.
By \cite[Chapter V. Proposition 2.8]{Har77}, there exists a section $C_0$ with
$\mO_{\mathbb{F}_m}(C_0 ) \cong \mO_{\mathbb{F}_m}(1)$.
Let $f$ be a fiber of $\sigma$.
By \cite[Chapter V. Theorem 2.17]{Har77}, there exists a section $C_{\infty}$ with $C_{\infty}  \sim C_0 + mf$. Set $D := C_0 + C_{\infty}$.
Notice that $D$ is a simple normal crossing divisor on $\mathbb{F}_m$.

\begin{lem}
\label{hirzebruch_bg}
$-(K_{\mathbb{F}_m} + D) $ is nef and the equality holds in the Bogomolov-Gieseker inequality for $T_{\mathbb{F}_m}(- \log D)$:
\begin{equation}
\label{Hilzebruch_n_BG}
c_2\bigl(T_{\mathbb{F}_m}(- \log D)\bigr) - \frac{1}{4} c_1\bigl(T_{\mathbb{F}_m}(- \log D)\bigr)^2 =0.
\end{equation}
\end{lem}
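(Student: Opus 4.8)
The plan is to compute everything explicitly on the Hirzebruch surface $\mathbb{F}_m$ using the standard intersection theory on it, together with the formula for the Chern classes of a logarithmic tangent bundle recalled at the start of Section 4. First I would record the intersection numbers on $\mathbb{F}_m$: with $C_0$ the section satisfying $\mathcal{O}_{\mathbb{F}_m}(C_0)\cong\mathcal{O}_{\mathbb{F}_m}(1)$ and $f$ a ruling fiber, we have $C_0^2=-m$, $C_0\cdot f=1$, $f^2=0$, and $C_\infty=C_0+mf$, so $C_\infty^2=m$, $C_\infty\cdot f=1$, $C_0\cdot C_\infty=0$. The canonical divisor is $K_{\mathbb{F}_m}=-2C_0-(m+2)f$ (equivalently $-2C_\infty+(m-2)f$). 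From these, with $D=C_0+C_\infty=2C_0+mf$, one computes $K_{\mathbb{F}_m}+D=-2f$, so $-(K_{\mathbb{F}_m}+D)=2f$ is nef (indeed semiample, being the pullback of an ample divisor on $\mathbb{P}^1$); this settles the nefness claim.

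Next I would verify the Bogomolov-Gieseker equality (\ref{Hilzebruch_n_BG}). By the formula recalled above, $c_1\bigl(T_{\mathbb{F}_m}(-\log D)\bigr)=-(K_{\mathbb{F}_m}+D)=2f$, hence $c_1\bigl(T_{\mathbb{F}_m}(-\log D)\bigr)^2=4f^2=0$. For the second Chern class, the formula gives
\begin{equation*}
c_2\bigl(T_{\mathbb{F}_m}(-\log D)\bigr)=c_2(T_{\mathbb{F}_m})+K_{\mathbb{F}_m}\cdot D+D^2-\sum_{i<j}D_i\cdot D_j,
\end{equation*}
with $D_1=C_0$, $D_2=C_\infty$. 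Here $c_2(T_{\mathbb{F}_m})=\chi_{\mathrm{top}}(\mathbb{F}_m)=4$, and a direct computation using the intersection numbers above gives $K_{\mathbb{F}_m}\cdot D$, $D^2$, and $C_0\cdot C_\infty=0$; these should combine to give $c_2\bigl(T_{\mathbb{F}_m}(-\log D)\bigr)=0$. Since both $c_1^2$ and $c_2$ vanish, the left-hand side of (\ref{Hilzebruch_n_BG}) is $0$, establishing the equality. (Since $\dim\mathbb{F}_m=2$, there is no ample divisor $H$ appearing, so the equality is as stated.)

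There is no real obstacle here — the statement is a bookkeeping computation — but the one point requiring a little care is keeping the two boundary components $C_0$ and $C_\infty$ straight (they have opposite self-intersection signs) and correctly applying the logarithmic Chern class formula with the correction term $\sum_{i<j}D_i\cdot D_j$; a sign or indexing slip there is the likeliest source of error. I would double-check the computation of $c_2\bigl(T_{\mathbb{F}_m}(-\log D)\bigr)$ by the alternative route of the residue sequences $0\to\Omega^1_{\mathbb{F}_m}\to\Omega^1_{\mathbb{F}_m}(\log D)\to\mathcal{O}_{C_0}\oplus\mathcal{O}_{C_\infty}\to 0$, which expresses $c(\Omega^1_{\mathbb{F}_m}(\log D))$ in terms of $c(\Omega^1_{\mathbb{F}_m})$ and $c(\mathcal{O}_{C_0})$, $c(\mathcal{O}_{C_\infty})$, to confirm it independently of the cited formula.
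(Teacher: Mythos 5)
Your proposal is correct and follows essentially the same route as the paper: the same intersection numbers on $\mathbb{F}_m$, the observation that $-(K_{\mathbb{F}_m}+D)\sim 2f$ is nef with square zero, and the same logarithmic Chern class formula giving $c_2\bigl(T_{\mathbb{F}_m}(-\log D)\bigr)=4-4+0-0=0$ (the paper computes $c_2(T_{\mathbb{F}_m})=4$ via the relative tangent sequence of the ruling rather than as the topological Euler characteristic, but this is immaterial). The only thing to finish is actually carrying out the arithmetic you defer --- $K_{\mathbb{F}_m}\cdot D=-4$, $D^2=0$, $C_0\cdot C_\infty=0$ --- which works out exactly as you expect.
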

\begin{proof}
We have $(C_0)^2 = -m$, $C_0f=1$, and $f^2=0$. From
\begin{equation}
\label{can_of_Hirzebruch}
-(K_{\mathbb{F}_m} + D)  \sim (2C_0 + (m+2)f) - (2C_0+mf) = 2f, 
\end{equation}
$-(K_{\mathbb{F}_m} + D) $ is nef and $c_1\bigl(T_{\mathbb{F}_m}(- \log D) \bigr)^2=(2f)^2= 0$.
From
$$c_2(T_{\mathbb{F}_m}) = c_1(T_{{\mathbb{F}_m}/\mP^1})c_1(\sigma^{*}T_{\mP^1})
= (2C_0 + mf)2f=4, $$
we obtain 
\begin{align*}
\begin{split}
c_2\bigl(T_{\mathbb{F}_m}(- \log D)\bigr)
&= c_2(T_{\mathbb{F}_m}) + K_{{\mathbb{F}_m}} D + D^2 - C_{0}C_{\infty}\\
&= 4-(2C_0+ (m+2)f) (2C_0+mf) + (2C_0+mf)^2 -C_0(C_0+mf) \\
& =4-4+0-0=0.\\
    \end{split}
  \end{align*}
Therefore Equality (\ref{Hilzebruch_n_BG}) holds.
\end{proof}
\begin{prop} 
$$\Omega^{1}_{\mathbb{F}_m}( \log D) \otimes \sigma^{*}(\mO_{\mP^1}(1))\cong \mO_{\mathbb{F}_m}^{\oplus 2}$$
holds.
In particular, $T_{\mathbb{F}_m}(- \log D)$ is numerically projectively flat.
\end{prop}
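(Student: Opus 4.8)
The plan is to show that $\Omega^{1}_{\mathbb{F}_m}(\log D)$ is the pullback along the ruling $\sigma$ of a rank-two bundle $G$ on $\mP^1$, to pin down $G$ by computing $\det G$ and $H^{0}(\mP^1,G)$, and then to twist by $\sigma^{*}\mO_{\mP^1}(1)$. First observe that $C_0$ and $C_\infty$ are disjoint: both are irreducible and $C_0 C_\infty = C_0(C_0+mf)=-m+m=0$. Hence $D=C_0\sqcup C_\infty$ is smooth, $\sigma$ is a smooth morphism, and every fiber $F\cong\mP^1$ of $\sigma$ meets $D$ transversally in two distinct points. By \cite[Lemma 2.14]{KK} we therefore have, for each such $F$, an exact sequence
$$
0 \to \mO_{F} \to \Omega^{1}_{\mathbb{F}_m}(\log D)|_{F} \to \Omega^{1}_{F}(\log D|_{F}) \to 0,
$$
and $\Omega^{1}_{F}(\log D|_{F})\cong\mO_{F}$ because $F\cong\mP^1$ and $\deg\Omega^{1}_{F}(\log D|_{F})=-2+2=0$; since $\operatorname{Ext}^{1}_{F}(\mO_{F},\mO_{F})=H^{1}(\mP^1,\mO_{\mP^1})=0$, this sequence splits, so $\Omega^{1}_{\mathbb{F}_m}(\log D)|_{F}\cong\mO_{F}^{\oplus 2}$ for every fiber $F$ of $\sigma$.

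Since $\Omega^{1}_{\mathbb{F}_m}(\log D)$ is a vector bundle that is trivial of rank $2$ on every fiber of $\sigma$, the theorem on cohomology and base change (with Grauert's theorem) shows that $G:=\sigma_{*}\Omega^{1}_{\mathbb{F}_m}(\log D)$ is locally free of rank $2$ on $\mP^1$, that its formation commutes with base change, and that the natural morphism $\sigma^{*}G\to\Omega^{1}_{\mathbb{F}_m}(\log D)$ restricts on each fiber $F$ to the evaluation map $H^{0}(F,\mO_{F}^{\oplus 2})\otimes_{\C}\mO_{F}\to\mO_{F}^{\oplus 2}$, which is an isomorphism; being an isomorphism on every fiber of $\sigma$, it is itself an isomorphism. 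Taking determinants and using $\det\Omega^{1}_{\mathbb{F}_m}(\log D)\cong\mO_{\mathbb{F}_m}(K_{\mathbb{F}_m}+D)\cong\sigma^{*}\mO_{\mP^1}(-2)$ (by (\ref{can_of_Hirzebruch}) and $\mO_{\mathbb{F}_m}(f)\cong\sigma^{*}\mO_{\mP^1}(1)$), we obtain $\det G\cong\mO_{\mP^1}(-2)$. By Grothendieck's classification of vector bundles on $\mP^1$, $G$ is a sum of two line bundles of total degree $-2$, so to conclude that $G\cong\mO_{\mP^1}(-1)^{\oplus 2}$ it suffices to prove that $H^{0}(\mathbb{F}_m,\Omega^{1}_{\mathbb{F}_m}(\log D))=H^{0}(\mP^1,G)$ vanishes.

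For this vanishing --- the one step where $m\ge 1$ enters --- I would use the residue exact sequence
$$
0 \to \Omega^{1}_{\mathbb{F}_m} \to \Omega^{1}_{\mathbb{F}_m}(\log D) \to \mO_{C_0}\oplus\mO_{C_\infty} \to 0.
$$
As $\mathbb{F}_m$ is a rational surface, $H^{0}(\mathbb{F}_m,\Omega^{1}_{\mathbb{F}_m})=0$, so $H^{0}(\Omega^{1}_{\mathbb{F}_m}(\log D))$ injects into $H^{0}(\mO_{C_0}\oplus\mO_{C_\infty})=\C^{2}$ with image the kernel of the connecting homomorphism $\C^{2}\to H^{1}(\mathbb{F}_m,\Omega^{1}_{\mathbb{F}_m})$; this connecting map is the cycle class map $(a,b)\mapsto a\,c_1(\mO_{\mathbb{F}_m}(C_0))+b\,c_1(\mO_{\mathbb{F}_m}(C_\infty))$ into $H^{1}(\mathbb{F}_m,\Omega^{1}_{\mathbb{F}_m})\cong H^{1,1}(\mathbb{F}_m)$. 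Since $C_\infty^{2}=(C_0+mf)^{2}=m$ and $C_0 C_\infty=0$, the $2\times 2$ intersection matrix of $\{C_0,C_\infty\}$ has determinant $-m^{2}\neq 0$ for $m\ge 1$; hence $[C_0]$ and $[C_\infty]$ are linearly independent in $H^{1,1}(\mathbb{F}_m)$, the connecting map is injective, and $H^{0}(\mathbb{F}_m,\Omega^{1}_{\mathbb{F}_m}(\log D))=0$. Therefore $G\cong\mO_{\mP^1}(-1)^{\oplus 2}$ and
$$
\Omega^{1}_{\mathbb{F}_m}(\log D)\otimes\sigma^{*}\mO_{\mP^1}(1)\cong\sigma^{*}\bigl(\mO_{\mP^1}(-1)^{\oplus 2}\otimes\mO_{\mP^1}(1)\bigr)\cong\mO_{\mathbb{F}_m}^{\oplus 2}.
$$

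For the last assertion, dualizing gives $T_{\mathbb{F}_m}(-\log D)\cong\sigma^{*}\mO_{\mP^1}(1)^{\oplus 2}$, a direct sum of copies of a single line bundle; such a sheaf is semistable with respect to every ample divisor and satisfies (\ref{Hilzebruch_n_BG}) by Lemma \ref{hirzebruch_bg}, so by Theorem-Definition \ref{num_proj_def} it is numerically projectively flat (equivalently, $\mathcal{E}\langle\tfrac{\det\mathcal{E}^{\vee}}{2}\rangle\cong\mO_{\mathbb{F}_m}^{\oplus 2}$ is nef). The main obstacle is the vanishing $H^{0}(\mathbb{F}_m,\Omega^{1}_{\mathbb{F}_m}(\log D))=0$: this is precisely what fails for $m=0$ (where $C_\infty$ is homologous to $C_0$ on $\mathbb{F}_0\cong\mP^1\times\mP^1$), and it rests on identifying the connecting homomorphism of the residue sequence with the cycle class map --- the only ingredient that is not a formal manipulation with the ruling $\sigma$ and with vector bundles on $\mP^1$.
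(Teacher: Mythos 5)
Your proof is correct, but it takes a genuinely different route from the paper's. The paper argues by brute force: it realizes $\mathbb{F}_m$ inside $\mP^1\times\mP^2$, covers it by four explicit charts, writes down transition functions for $\sigma^{*}\mO_{\mP^1}(1)$, and exhibits two explicit nowhere-vanishing, linearly independent global logarithmic forms $S_1,S_2$ that trivialize $\Omega^{1}_{\mathbb{F}_m}(\log D)\otimes\sigma^{*}\mO_{\mP^1}(1)$. You instead prove that the restriction of $\Omega^{1}_{\mathbb{F}_m}(\log D)$ to every fiber of $\sigma$ is trivial (via the relative log cotangent sequence and $H^1(\mP^1,\mO_{\mP^1})=0$), invoke Grauert and cohomology-and-base-change to write the bundle as $\sigma^{*}G$ for a rank-two $G$ on $\mP^1$, and then pin down $G\cong\mO_{\mP^1}(-1)^{\oplus 2}$ from $\det G\cong\mO_{\mP^1}(-2)$ together with the vanishing $H^{0}(\mathbb{F}_m,\Omega^{1}_{\mathbb{F}_m}(\log D))=0$, which you deduce from the residue sequence and the linear independence of $[C_0]$ and $[C_\infty]$ in $H^{1,1}(\mathbb{F}_m)$ (nondegeneracy of their intersection matrix for $m\ge 1$). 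All steps check out, including the identification of the connecting homomorphism with the cycle class map and the observation that a fiberwise isomorphism of bundles of equal rank is an isomorphism; the concluding derivation of numerical projective flatness from semistability of $\sigma^{*}\mO_{\mP^1}(1)^{\oplus 2}$ and Lemma \ref{hirzebruch_bg} matches the paper. What the paper's computation buys is complete explicitness (the trivializing sections are displayed) with no cohomological machinery; what your argument buys is conceptual clarity and generality --- it applies verbatim to any $\mP^1$-bundle over a curve with boundary two disjoint sections whose classes are independent, and it isolates exactly where $m\ge 1$ is used, namely in the vanishing of $H^{0}(\Omega^{1}_{\mathbb{F}_m}(\log D))$, which indeed fails on $\mathbb{F}_0$.
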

\begin{proof}
From $\mathbb{F}_m=\{([x_1:x_2], [y_0:y_1:y_2]) \in \mP^1 \times \mP^2 \, \vert \, y_1x_{2}^{m}=y_2x_{1}^{m}\}$, we obtain
$$C_0 =\{([x_1:x_2], [1:0:0]) \in \mathbb{F}_m \, \vert \, [x_1:x_2] \in \mP^1\}$$
and 
$$C_{\infty}=\{([x_1:x_2], [0:x_{1}^{m}:x_{2}^{m}]) \in \mathbb{F}_m \, \vert \, [x_1:x_2] \in \mP^1\}.$$
We define the Zariski open sets $W_k\cong \C^2$ in $\mathbb{F}_m$ for $k=1,2,3,4$ as follows:
$$
\begin{array}{ccccccccc}
\tau_1 : &W_1                 & \rightarrow& \mathbb{F}_m     & &\tau_2  : &  W_2  &\rightarrow & \mathbb{F}_m   \\
&(x,y)                  &    \mapsto  &([1:x], [1:y:x^{m}y] ) &  &   &(u,v) &  \mapsto &([1:u], [v:1:u^{m}]) \\
\tau_3 : &W_3                 & \rightarrow& \mathbb{F}_m     & &\tau_4 :&   W_4  &\rightarrow & \mathbb{F}_m   \\
&(\xi, \eta)             &    \mapsto  &([\xi:1], [1:\xi^{m}\eta:\eta]) &  &  & (z,w)&  \mapsto &([z:1], [w:z^{m}:1]).
\end{array}
$$
By computations, we have 
$$x=u=\frac{1}{\xi}=\frac{1}{z}\,\,\,\text{and} \,\,\, y=\frac{1}{v}=\xi^{m}\eta=\frac{z^m}{w}.$$
Hence the local basis of $\Omega^{1}_{\mathbb{F}_m}(\log D)$ are as shown in the following table:
\begin{table}[htb]
  \begin{tabular}{|c||c|c|c|c|} \hline
   & on $W_1$& on $W_2$ &on $W_3$ & on $W_4$ \\ \hline 
  local basis of  $\Omega^{1}_{\mathbb{F}_m}(\log D)$  &$dx, \frac{dy}{y}$
    & $du, \frac{dv}{v}$
    & $d\xi, \frac{d\eta}{\eta}$& $dz, \frac{dw}{w}$\\ \hline
  \end{tabular}
\end{table}

Set
$$h_{W_1W_2}:=1, h_{W_1W_3}:=x, h_{W_1W_4}:=x,
h_{W_2W_3}:=u, h_{W_2W_4}:=u, h_{W_3W_4}:=1,$$
and $h_{W_{j}W_{i}}:=h_{W_{i}W_{j}}^{-1}$ for any $i,j \in \N$ with $1 \le i < j\le4$.
Then $\{ h_{W_{i}W_{j}}\}_{1 \le i,j \le 4}$ are transition functions of $\sigma^{*}(\mO_{\mP^1}(1))$.

We would like to find two 
nowhere vanishing global sections in $\Omega^{1}_{\mathbb{F}_m}( \log D) \otimes \sigma^{*}(\mO_{\mP^1}(1))$.
To find a global section, 
it is enough to 
find a 4-tuple $(t_1, t_2, t_3, t_4)$ of 
local holomorphic logarithmic differential forms
such that $t_i \in H^0(W_i, \Omega^{1}_{\mathbb{F}_m}( \log D))$
and $t_i = h_{W_i W_j}t_j$ for any $1 \le i, j \le 4$.

The first section $S_1$ is given by
$$
S_1 := \left(\frac{dy}{y}, -\frac{dv}{v}, md\xi + \frac{\xi d\eta}{\eta}, mdz - \frac{z dw}{w}\right), 
$$
and the second section $S_2$ is given by
$$
S_2:=\left(mdx + \frac{x dy}{y}, mdu -\frac{udv}{v}, \frac{d\eta}{\eta}, - \frac{ dw}{w}\right).
$$
$S_1$ and $S_2$ are nowhere vanishing global sections in
$\Omega^{1}_{\mathbb{F}_m}( \log D) \otimes \sigma^{*}(\mO_{\mP^1}(1))$.
Moreover, $S_1$ and $S_2$ are linearly independent.
Hence $\Omega^{1}_{\mathbb{F}_m}( \log D) \otimes \sigma^{*}(\mO_{\mP^1}(1))\cong \mO_{\mathbb{F}_m}^{\oplus 2}$.

From $\Omega^{1}_{\mathbb{F}_m}( \log D) \cong \sigma^{*}(\mO_{\mP^1}(-1))^{\oplus 2}$, $\Omega^{1}_{\mathbb{F}_{m}}( \log D)$ is semistable with respect to some ample divisor.
By Theorem-Definition \ref{num_proj_def} and Lemma \ref{hirzebruch_bg},
$T_{\mathbb{F}_m}(- \log D)$ is numerically projectively flat.
\end{proof}

By (\ref{can_of_Hirzebruch}), 
we have $(K_{\mathbb{F}_m} + D)C_0 = -2<0$ and $(K_{\mathbb{F}_m} + D) f = 0$.
From $\overline{NE}(\mathbb{F}_m) =  \R_{+}[f] + \R_{+}[C_0]$,
only $\R_{+}[C_0]$ is a $(K_{\mathbb{F}_m }+ D)$-negative extremal ray.
If $m=1$, then a blow-down $ (\mathbb{F}_1, D) \rightarrow (\mP^2, H_{\mP^2})$ along $C_0$ is a $(K_{\mathbb{F}_1} + D)$-negative extremal contraction induced by $\R_{+}[C_0]$.
Hence $(\mathbb{F}_1, D)$ is not a Mori fiber space.

We consider the case of $m \ge 2$.
Let $R$ be the image of the $m$-th Veronese embedding $\mP^1 \rightarrow \mP^{m} $ and $Y$ be the projective cone over $R$.
By \cite[Chapter V. Example 2.11.4]{Har77}, there exists a $(K_{\mathbb{F}_m} + D)$-negative extremal contraction $\tau : (\mathbb{F}_{m} , D) \rightarrow (Y, R)$  induced by $\R_{+}[C_0]$ such that $\tau$ contracts $C_0$ to the vertex of $Y$.
Hence $(\mathbb{F}_m, D)$ is not a Mori fiber space and a minimal model of $(\mathbb{F}_{m} , D)$ is not isomorphic to $(\mP^2, H_{\mP^2})$.

\subsection{On slope rationally connected varieties}
\label{sRC_quotient}
This study is motivated by the following conjecture.
\begin{conj}\cite[Conjecture 1.5]{CCM}
\label{CCM_conj}
Let $X$ be a smooth projective variety and $D$ be an effective $\Q$-divisor on $X$.
Assume that $(X,D)$ is klt and $-(K_X + D)$ is nef.
Then there exists an orbifold morphism $\rho : (X,D) \rightarrow (R,D_R)$
with the following properties:
\begin{enumerate}
\item $(R,D_R)$ is a klt pair and $c_1(K_R + D_R)=0$.
\item For general point $r \in R$, the general fiber $(X_r, D_r)$ is slope rationally connected $($for the definition of slope rationally connectedness, see \cite[Definition 1.2]{Cam16}$)$.
\item $\rho$ is locally trivial with respect to pairs.
\end{enumerate}
\end{conj}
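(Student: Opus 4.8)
The plan is to produce $\rho$ as the \emph{slope rationally connected quotient} of the orbifold pair $(X,D)$ in the sense of Campana, and then to use the nefness of $-(K_X+D)$ first to determine the base and afterwards to promote the fibration to a locally trivial one. First I would invoke the existence of an almost holomorphic orbifold fibration $\rho_0:(X,D)\dashrightarrow(R_0,D_{R_0})$ whose general orbifold fibers are slope rationally connected and which is maximal among fibrations with this property; this is the orbifold analogue of the MRC quotient and already yields property (2). The first technical point is to replace $\rho_0$ by an honest morphism with klt base: running a $(K_X+D)$-relative minimal model program and using that $-(K_X+D)$ is nef (so that no destabilising contraction runs in the fibre direction), I would remove the indeterminacy and obtain $\rho:(X,D)\to(R,D_R)$ with $(R,D_R)$ klt.

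Next I would prove $c_1(K_R+D_R)=0$ by a two-sided estimate. By the maximality of the slope rationally connected quotient, $(R,D_R)$ admits no nontrivial slope rationally connected orbifold fibration, so it is not slope-uniruled; an orbifold form of the Miyaoka--Mori criterion, equivalently the Boucksom--Demailly--Paun--Peternell dichotomy applied to the orbifold cotangent sheaf of $(R,D_R)$, then shows that $K_R+D_R$ is pseudo-effective. For the reverse inequality I would descend the positivity of $-(K_X+D)$ to the base: combining Campana--Paun generic semipositivity for $\Omega^1_X(\log\lceil D\rceil)$ with a $C_{n,m}$-type lower bound for the relative log canonical class $K_{X/R}+D$ gives that $-(K_R+D_R)$ is pseudo-effective as well. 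Together these force $K_R+D_R\equiv 0$, and since $(R,D_R)$ is klt this is exactly $c_1(K_R+D_R)=0$, so $(R,D_R)$ is a klt orbifold of Calabi--Yau type and property (1) holds.

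It remains to establish local triviality with respect to pairs (property (3)). Because $K_R+D_R\equiv 0$ we have $-(K_{X/R}+D)=-(K_X+D)+\rho^*(K_R+D_R)$ nef, so $\rho$ is a family of slope rationally connected pairs whose relative log anticanonical class is nef. Following the strategy of Cao--H\"oring in the absolute case, I would show that this nefness makes the relative logarithmic data \emph{as flat as possible} --- the relevant direct images $\rho_*\!\big(m(K_{X/R}+D)\big)$ and the foliation $T_{X/R}(-\log D)$ acquire a numerically flat, hence locally constant, structure --- so that $\rho$ becomes a locally constant fibration. Such a fibration is locally a projection of a product, compatibly with the boundary, by the same local product argument used in the Claim in the proof of Theorem \ref{uniform_n+1}; transporting $D$ through this product then gives local triviality for pairs.

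\textbf{The main obstacle} is this last step: upgrading $\rho$ to a morphism that is locally trivial \emph{as a morphism of pairs}. In the absolute case ($D=0$, $X$ smooth) this is the content of the Cao--H\"oring decomposition theorem, whose proof depends on subtle positivity of direct image sheaves and on the flatness of the relative tangent bundle. The orbifold refinement needed here --- controlling the boundary $D$ and the klt singularities of the base simultaneously, while the general slope rationally connected fibre is far from toric --- is not yet available in this generality, which is precisely why the statement is posed as a conjecture; a complete proof would require an orbifold version of the Cao--H\"oring holonomy argument, and this is the step I expect to be the hardest.
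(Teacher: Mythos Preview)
The statement you are attempting to prove is \emph{Conjecture}~\ref{CCM_conj}: in the paper it is recorded as an open conjecture quoted from \cite{CCM}, and the paper provides \emph{no proof} of it. The only remark the paper makes is that the surface case is known by \cite[Theorem~1.6]{CCM}. So there is no ``paper's own proof'' to compare your proposal against.

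Your write-up is not a proof but a plausible strategic outline, and you yourself concede this in the final paragraph. The first two paragraphs assemble standard ingredients --- existence of the sRC-quotient, pseudo-effectivity of $K_R+D_R$ from non-uniruledness, and a $C_{n,m}$/direct-image argument to force $K_R+D_R\equiv 0$ --- in the spirit of \cite{Cam16} and \cite{CCM}; this part is already delicate (for instance, passing from the almost holomorphic $\rho_0$ to an honest morphism with klt base via a relative MMP is not justified by the nefness of $-(K_X+D)$ alone), but it is at least a coherent plan. The decisive gap is exactly the one you name: promoting $\rho$ to a fibration that is locally trivial \emph{with respect to pairs}. No orbifold analogue of the Cao--H\"oring holonomy/decomposition argument is currently available in this generality, and nothing in your sketch supplies one. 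In short, your proposal correctly identifies the expected architecture of a proof and the principal obstruction, but it does not close the gap, and neither does the paper --- which is why the statement remains a conjecture.
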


An orbifold morphism $\rho : (X,D) \dashrightarrow (R,D_R)$ is called 
\textit{slope rationally connected quotient} (in short \textit{sRC-quotient})
if a general fiber $(X_r, D_r)$ is slope rationally connected and $K_R + D_R$ is pseudo-effective.
By \cite[Theorem 1.5]{Cam16}, an sRC-quotient exists and is unique up to orbifold birational equivalence.
Notice that an sRC-quotient is a generalization of an MRC-fibration to an orbifold pair.
From Conjecture \ref{CCM_conj}, it is expected that we  can take an sRC-quotient as a smooth morphism for any klt pair $(X, D)$ such that $-(K_X + D)$ is nef.
If $X$ is a smooth surface, then Conjecture \ref{CCM_conj} holds by \cite[Theorem 1.6]{CCM}.

At least in the special case of nef logarithmic tangent bundle, one might be tempted to propose the following conjecture.

\begin{conj}
\label{CCM_conj_logsmooth}
Let $(X,D)$ be a log smooth pair.
If the logarithmic tangent bundle $T_{X}(- \log D)$ is nef, then 
we can take a smooth 
sRC-quotient.
\end{conj}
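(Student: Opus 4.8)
The plan is to follow the same strategy that was used in the proofs of Theorems \ref{uniform_n+1} and \ref{uniform_n}, where nefness of $T_X(-\log D)$ was the key input. Since $T_X(-\log D)$ is nef, by \cite[Lemma 3.1.12]{Fujnobook} the ordinary tangent bundle $T_X$ is pseudo-effective, so by \cite[Theorem 1.1]{HIM21} there is a smooth morphism $f : X \rightarrow Y$ onto a finite \'etale quotient of an Abelian variety whose fibers are rationally connected. The first step would be to upgrade this to a morphism of pairs: exactly as in the Claim in the proof of Theorem \ref{uniform_n+1}, the differential $f^*\Omega_Y^1 \hookrightarrow \Omega_X^1(\log D)$ together with nefness of $\wedge^{\dim Y}T_X(-\log D)\otimes f^*\det\Omega_Y^1$ and \cite[Proposition 1.2]{CP91} forces $\wedge^{\dim Y}s$ to be nowhere vanishing, hence $f$ is a logarithmic deformation in the sense of \cite[Definition 3]{Kawa78}, the fibers $F$ meet $D$ transversally, and $f:(X,D)\rightarrow Y$ is locally trivial for pairs. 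In particular we get the exact sequence $0\to T_F(-\log D_F)\to T_X(-\log D)|_F\to \mathcal O_F^{\oplus\dim Y}\to 0$, so $T_F(-\log D_F)$ is nef.

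The second step is to identify $f$ with the sRC-quotient. The base $Y$, being a finite \'etale quotient of an Abelian variety, carries $K_Y\equiv 0$, and since $f$ is locally trivial for pairs we have $D_R = 0$, so $(R,D_R) = (Y,0)$ with $K_R+D_R\equiv 0$; thus $K_R + D_R$ is pseudo-effective. It then remains to show that the fibers $(F, D_F)$ are slope rationally connected in the sense of \cite[Definition 1.2]{Cam16}. This is where one must work: $F$ is already rationally connected in the usual sense, but slope rational connectedness is a statement about the positivity of the orbifold cotangent sheaf $\Omega^1(F,D_F)$ along covering families of rational curves, and having a boundary $D_F\neq 0$ could in principle obstruct it. One natural route is to combine rational connectedness of $F$ with nefness of $T_F(-\log D_F)$: a rationally connected $F$ has no nonzero global $1$-forms, and more generally one wants to rule out, for a general member $C$ of a covering family of rational curves on $F$, any quotient of $\Omega^1(F,D_F)|_C$ of nonnegative degree; nefness of $T_F(-\log D_F)$ gives an upper bound on such degrees and should, together with genericity of $C$ avoiding the non-reduced structure, yield the orbifold-rational-connectedness criterion.

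The main obstacle, I expect, is precisely this last point: translating "nef $T_F(-\log D_F)$ plus $F$ rationally connected" into "$(F,D_F)$ slope rationally connected" in the orbifold sense of Campana, since the orbifold notion involves choosing covering families of orbifold rational curves and controlling their intersection with each component of $D_F$ with prescribed multiplicities, and a general rational curve on $F$ need not be well-behaved with respect to $D_F$. One may need to invoke the structure of $F$ more explicitly — for instance, if one can show $T_F(-\log D_F)$ is in fact numerically flat on each fiber (which happens when $K_F + D_F\equiv 0$, giving $F$ toric by \cite{win}), the toric case of slope rational connectedness is essentially combinatorial; the remaining case $K_F+D_F\not\equiv 0$ would have to be handled by the degree estimate above. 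A clean reduction would be to first prove the conjecture under the extra hypothesis that $T_X(-\log D)$ is numerically projectively flat (where Theorems \ref{uniform_n+1} and \ref{uniform_n} already pin down $(X,D)$ completely) and then argue that the general nef case still admits the same fibration $f$ as a smooth sRC-quotient once the fiberwise slope rational connectedness is established.
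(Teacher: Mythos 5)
The statement you are trying to prove is a conjecture that the paper itself immediately refutes: Proposition \ref{nef_log_tangent_SRCconj} exhibits $(\mathbb{P}^2, H_1+H_2)$, with $H_1,H_2$ two coordinate lines, as a log smooth pair whose logarithmic tangent bundle is nef but which does not admit a smooth sRC-quotient. So no argument can succeed, and it is worth locating exactly where yours breaks. Your first step is sound and coincides with the paper's proof of Theorem \ref{uniform_n+1}: nefness of $T_X(-\log D)$ yields a smooth morphism $f:X\rightarrow Y$ onto a finite \'etale quotient of an Abelian variety with rationally connected fibers, and the local triviality for pairs goes through. The fatal step is precisely the one you flagged as the ``main obstacle'': deducing that a fiber $(F,D_F)$ is slope rationally connected from ``$F$ rationally connected and $T_F(-\log D_F)$ nef''. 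In the counterexample $X=\mathbb{P}^2$ is rationally connected, so $Y$ is a point and the unique fiber is $(F,D_F)=(\mathbb{P}^2,H_1+H_2)$; its log tangent bundle is nef, yet the pair is not slope rationally connected.

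The reason your proposed degree estimate cannot close this gap is visible in the example: the sRC-quotient of $(\mathbb{P}^2,H_1+H_2)$ is the rational map to $(\mathbb{P}^1,[0]+[\infty])$ induced by the pencil of lines through $[1:0:0]$ (after blowing up that point one gets an honest fibration by $(\mathbb{P}^1,[p])$'s), and the target satisfies $K_{\mathbb{P}^1}+[0]+[\infty]\sim 0$, hence is pseudo-effective and positive-dimensional, so the pair is not slope rationally connected; moreover, since $\mathbb{P}^2$ has Picard number one, this quotient can never be realized as a morphism, smooth or otherwise. A general rational curve on $F$ does meet the boundary, and the orbifold slope condition of \cite[Definition 1.2]{Cam16} is sensitive to exactly that boundary contribution in a way that nefness of $T_F(-\log D_F)$ does not control. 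Your suggested fallback of reducing to the numerically projectively flat case also does not help: $T_{\mathbb{P}^2}(-\log(H_1+H_2))$ is nef with $c_1=0$ on no subvariety forcing triviality, and the pair is neither toric-fibered over a log Calabi--Yau base by a morphism nor covered by Theorems \ref{uniform_n+1} or \ref{uniform_n} in a way that rescues the conclusion. In short, the implication ``underlying variety rationally connected $+$ log tangent bundle nef $\Rightarrow$ pair slope rationally connected'' is false, and that is the entire content of the paper's counterexample.
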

 
 By \cite{CP91} and \cite{DPS94} (or by the argument of  \cite[Theorem 1.1]{HIM21}), 
if $T_X$ is nef, then we can take a smooth MRC fibration $f: X \rightarrow Y$
such that $Y$ is a finite \'etale quotient of an Abelian variety.
Since any fiber $F$ of $f$ is rationally connected and $T_F$ is nef, $F$ is Fano by \cite[Proposition 3.10]{DPS94}.
 Hence if $T_X$ is nef, then $X$ consists of an Abelian variety and a Fano variety, up to a finite \'etale cover.
Therefore 
to study Conjecture \ref{CCM_conj_logsmooth} is to study the structure of a log smooth pair with a nef logarithmic tangent bundle,
such as \cite{CP91} and \cite{DPS94}.

However, there exists a counter-example of Conjecture \ref{CCM_conj_logsmooth}.

\begin{prop}
\label{nef_log_tangent_SRCconj}
Let $[z_0: z_1: z_2]$ be a coordinate of $\mathbb{P}^2$.
Set $H_1 := \{ z_1=0\}$ and $H_2 := \{ z_2=0\}$.
Then $T_{\mathbb{P}^2}(-\log (H_1 + H_2))$ is nef but
$(\mP^2, H_1+H_2)$ is not slope rationally connected.
In this example,  we can not take an sRC-quotient as a smooth morphism.
\end{prop}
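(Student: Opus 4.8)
The plan is to verify directly that $T_{\mP^2}(-\log(H_1+H_2))$ is nef and then to exhibit the sRC-quotient explicitly, showing it cannot be realized by a smooth morphism.

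First I would show nefness. The divisor $H_1+H_2+H_0$, where $H_0 := \{z_0 = 0\}$, is a toric boundary making $(\mP^2, H_0+H_1+H_2)$ a log-Calabi-Yau toric pair, so $T_{\mP^2}(-\log(H_0+H_1+H_2)) \cong \mO_{\mP^2}^{\oplus 2}$ is trivial, hence nef. From the residue sequence
$$
0 \rightarrow T_{\mP^2}\bigl(-\log(H_0+H_1+H_2)\bigr) \rightarrow T_{\mP^2}\bigl(-\log(H_1+H_2)\bigr) \rightarrow \mO_{H_0} \rightarrow 0,
$$
one sees $T_{\mP^2}(-\log(H_1+H_2))$ is an extension of a nef line bundle on a divisor by a nef (trivial) bundle; a standard argument — e.g. combining \cite[Lemma 3.1.12]{Fujnobook} with the nefness criterion, or directly checking that the Grothendieck tautological bundle has no negative quotient — gives that $T_{\mP^2}(-\log(H_1+H_2))$ is nef. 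Alternatively, since $-(K_{\mP^2} + H_1 + H_2) \sim H_0$ is nef and $c_1 = H_0$, one computes $c_2(T_{\mP^2}(-\log(H_1+H_2))) = 0$ using the formula from \cite[Example 3.5]{GT}, so the $\Q$-twisted sheaf $T_{\mP^2}(-\log(H_1+H_2))\langle \tfrac{K_{\mP^2}+H_1+H_2}{2}\rangle$ has vanishing discriminant and, being nef after the twist by Theorem-Definition \ref{num_proj_def}, is numerically projectively flat; then \cite[Lemma 4.3]{LOY20} together with nefness of $-(K+D)$ upgrades this to nefness of the untwisted bundle.

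Next I would identify the sRC-quotient. The log pair $(\mP^2, H_1+H_2)$ is a compactification of the torus $(\C^*)^2$, but the complement structure is not log-Calabi-Yau: $-(K+H_1+H_2) \sim H_0 \not\equiv 0$, so $(\mP^2, H_1+H_2)$ is log Fano — yet it is not slope rationally connected, because the logarithmic cotangent bundle $\Omega^1_{\mP^2}(\log(H_1+H_2))$ has a nonzero global section (indeed $\tfrac{dz_1}{z_1}$ and $\tfrac{dz_2}{z_2}$, descending appropriately, give sections), so there are nontrivial "orbifold forms" obstructing slope rational connectedness in the sense of \cite[Definition 1.2]{Cam16}. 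Concretely, the pencil of lines through the point $[1:0:0] = H_1 \cap H_2$ gives a rational map $\pi : \mP^2 \dashrightarrow \mP^1$ whose general fiber is a line $\ell$ meeting $H_1$ and $H_2$ each once, so $(\ell, D|_\ell) \cong (\mP^1, \{0\}+\{\infty\})$, which is log-Calabi-Yau — this is the sRC-quotient, with base $(\mP^1, \tfrac{1}{?}\cdot(\text{nothing}))$ or rather with trivial orbifold base, and $K_{\mP^1} + D_{\mP^1}$ pseudo-effective as required. The point is that this $\pi$ is only a rational map: it is not defined at $[1:0:0]$, and there is no smooth morphism in its birational class because any resolution introduces an exceptional curve over $[1:0:0]$ which is a component of (the total transform of) $H_1+H_2$, breaking the product structure along fibers. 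I would make this last assertion precise by arguing as in the Claim in the proof of Theorem \ref{uniform_n+1}: a smooth sRC-quotient $f : (\mP^2, H_1+H_2) \to Y$ would have to be a logarithmic deformation, forcing $D$ to meet each fiber transversally and the relative log-tangent sequence to split off $\mO^{\oplus \dim Y}$; tracing through, $\dim Y = 1$ and the two components $H_1, H_2$ would each have to be sections or be vertical, and a direct incidence check (the lines through $[1:0:0]$ do not sweep out a morphism, and $\mP^2$ has Picard number one so admits no genuine fibration) yields the contradiction.

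**The main obstacle.** The hard part will be the last step: rigorously ruling out \emph{any} smooth sRC-quotient, not merely observing that the obvious pencil has a base point. Since the sRC-quotient is unique only up to orbifold birational equivalence, one must argue that \emph{no} model in its equivalence class is a smooth morphism on $(\mP^2, H_1+H_2)$ itself — equivalently, that the rational map $\pi$ cannot be made into a morphism on the given pair. The cleanest route is the one above: invoke the logarithmic-deformation rigidity coming from nefness of $\wedge^{\dim Y} T_{\mP^2}(-\log D)$ (via \cite[Proposition 1.2]{CP91}) to force any smooth sRC-quotient to be a logarithmic deformation, then derive a numerical contradiction from $\rho(\mP^2) = 1$ and the structure of $D$. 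Verifying that $(\mP^2, H_1+H_2)$ is genuinely not slope rationally connected (step two's first half) also requires care with the definition in \cite{Cam16}, but the existence of the logarithmic $1$-form $\tfrac{dz_1}{z_1} \wedge \tfrac{dz_2}{z_2} \in H^0(\mP^2, \Omega^2_{\mP^2}(\log(H_1+H_2)))$ — a nowhere-vanishing log top-form, witnessing that $(\mP^2, H_1+H_2)$ has log Kodaira dimension $\ge 0$ "in the orbifold sense" along the torus directions — is the conceptual obstruction to pin down.
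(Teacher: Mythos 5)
Your overall strategy (establish nefness, exhibit the pencil through $[1:0:0]$ as the sRC-quotient, and use $\rho(\mP^2)=1$ to rule out a smooth model) is the paper's strategy, but several of your concrete steps fail. First, the ``alternative'' nefness argument via numerical projective flatness is wrong: with $c_1\bigl(T_{\mP^2}(-\log(H_1+H_2))\bigr)=H_0$ and $c_2=0$, the discriminant is $c_2-\tfrac{1}{4}c_1^2=-\tfrac14\neq 0$, not $0$; a negative discriminant forces instability (indeed the section $d\log(z_1/z_2)$ gives $\mO_{\mP^2}\hookrightarrow\Omega^1_{\mP^2}(\log(H_1+H_2))$ destabilizing it), so Theorem-Definition \ref{num_proj_def} cannot be invoked and the bundle is nef but \emph{not} numerically projectively flat. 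Your primary route via the residue sequence is also incomplete: the quotient is $N_{H_0}\cong\mO_{H_0}(1)$ rather than $\mO_{H_0}$ (harmless), but the real issue is that ``extension of a nef sheaf on a divisor by a nef bundle is nef'' is not a citable standard fact and \cite[Lemma 3.1.12]{Fujnobook} says something else entirely; you would need to actually argue, e.g., global generation (using $H^1(\mP^2,\mO^{\oplus 2})=0$). The paper sidesteps all of this by pulling back to the blow-up $X\cong\mathbb{F}_1$ at $[1:0:0]$, where $T_X(-\log(\widetilde{H_1}+\widetilde{H_2}+E))\cong\pi^*T_{\mP^2}(-\log(H_1+H_2))$ and Kato's log smooth morphism sequence for the ruling exhibits the bundle as an extension of $\sigma^*T_{\mP^1}(-\log([0]+[\infty]))\cong\mO_X$ by the nef line bundle $\pi^*\mO_{\mP^2}(H_1)$.

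Second, your description of the sRC-quotient is geometrically incorrect. A general line $\ell$ through $[1:0:0]=H_1\cap H_2$ meets $H_1+H_2$ \emph{only at the base point}, with multiplicity two, so $(\ell,(H_1+H_2)|_\ell)=(\mP^1,2\cdot\mathrm{pt})$, not $(\mP^1,\{0\}+\{\infty\})$ --- and note that $(\mP^1,\{0\}+\{\infty\})$ is log Calabi--Yau, hence not slope rationally connected, so if that were the general fiber the pencil would not be an sRC-quotient at all. One must pass to the blow-up: there the general fiber of $\sigma$ meets the boundary $\widetilde{H_1}+\widetilde{H_2}+E$ only in $E$, in a single reduced point, giving the log Fano fiber $(\mP^1,[p])$, while $\widetilde{H_1},\widetilde{H_2}$ become the fibers over $[0],[\infty]$ and endow the base with the orbifold divisor $[0]+[\infty]$, so that $K_{\mP^1}+[0]+[\infty]\sim 0$ is pseudo-effective --- your ``trivial orbifold base'' would violate the pseudo-effectivity requirement in the definition of an sRC-quotient. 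Finally, the ``nowhere-vanishing log top-form'' $\tfrac{dz_1}{z_1}\wedge\tfrac{dz_2}{z_2}$ does not exist as a section of $\Omega^2_{\mP^2}(\log(H_1+H_2))\cong\mO_{\mP^2}(-1)$ (it acquires a pole along $H_0$, which is not in the boundary), so that proposed obstruction is vacuous. The concluding step, that $\rho(\mP^2)=1$ forbids any smooth fibration realizing the quotient, matches the paper and is fine.
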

By \cite[Example 10.2]{Cam16}, we already know that $(\mP^2, H_1+H_2)$ is not slope rationally connected.
We give a proof of this fact for the reader's convenience.

\begin{proof}
Let  $\pi : X \rightarrow \mP^2$ be a blow-up of $\mP^2$ at $[1:0:0]$, $E$ be an exceptional divisor of $\pi$, and 
$ \widetilde{H_1}$ (resp. $\widetilde{H_2}$) be a strict transform of $H_1$ (resp. $H_2$)  by $\pi$.
Set $D := \widetilde{H_1}+\widetilde{H_2}+E$.
 From $K_X + D = \pi^{*}(K_{\mathbb{P}^2} +H_1 + H_2) $ and $\pi^{-1}\bigl(\text{Supp}(H_1 + H_2) \bigr) = \text{Supp}(D)$, we obtain 
$$T_{X}(- \log D) \cong \pi^{*} T_{\mathbb{P}^2}(- \log (H_1 + H_2))$$ by \cite[Chapter 11]{Iitaka}.
It is enough to show that $T_{X}(- \log D)$ is nef.
 
Notice that $X \cong \mP(\mO_{\mP^1} \oplus \mO_{\mP^1}(-1))$.
Let $\sigma : X \rightarrow \mP^1$ be the ruling of $X$ and 
$[z_1: z_2]$ be a coordinate of $\mathbb{P}^1$.
Set $[0] := \{ z_1 =0\} \subset \mP^1$ and 
$[\infty] := \{ z_2 =0\} \subset \mP^1$.
Then we have $\sigma_{*}\widetilde{H_1} = [0]$ and $\sigma_{*}\widetilde{H_2} = [\infty]$.
Since $\sigma : (X,D) \rightarrow (\mathbb{P}^1, [0]+[\infty])$ is a log smooth morphism in the sense of \cite[Chapter 3]{Kato}, 
there exists a line bundle $F$ on $X$ such that
$$
0 \rightarrow F \rightarrow T_X(-\log D) \rightarrow \sigma^{*} T_{\mathbb{P}^1}(-\log ([0]+[\infty])) \rightarrow 0
$$
by \cite[Proposition 3.12]{Kato}.
From $ F \cong \mO_{X}(-K_X -D) \cong \pi^{*}\mO_{\mP^2}(H_1)$, $F$ is nef.
Since $\sigma^{*} T_{\mathbb{P}^1}(-\log ([0]+[\infty])) $ is nef, $T_X(-\log D)$ is also nef.

 A general fiber of $\sigma : (X,D) \rightarrow (\mathbb{P}^1, [0]+[\infty])$ is isomorphic to $(\mP^1, [p])$ for some $p \in \mP^1$. 
 Hence a general fiber of $\sigma$ is slope rationally connected.
Thus $\sigma \circ \pi^{-1}: (\mP^2, H_1+H_2) \dashrightarrow (\mathbb{P}^1, [0]+[\infty])$
 is an sRC-quotient, and finally $(\mP^2, H_1+H_2)$ is not slope rationally connected.
 Since the Picard number of $\mP^2$ is one, 
 we can not take an sRC-quotient as a (smooth) morphism.
\end{proof}

\subsection{On the assumption of a semistability condition}

Without some assumptions such as semistability in Theorem \ref{uniform_n+1} or \ref{uniform_n}, it is difficult to study the structure of a log smooth pair when the equality holds in the Bogomolov-Gieseker inequality. 
In fact, there exist many examples of log smooth pairs such that Equality (\ref{BMY_n+1}) or (\ref{BMY_n}) holds. 
We give a few examples.
In this subsection, Let $X$ be a smooth projective variety and $D = \sum_{i=1}^{l} D_i$ be a simple normal crossing divisor on $X$.

First, we consider the case of $X=\mathbb{P}^n$.
Let $H$ be a hyperplane of $\mP^n$ and $d_i$ be a positive integer with $D_i \sim d_i H$ for any $1 \le i \le l$.
Then we have at least 18 examples such that Equality (\ref{BMY_n+1}) or (\ref{BMY_n}) holds, $D \neq 0$, and $D \neq H$ by computations using a computer.
For example, 
if $n=7$, $l=3$, and $(d_1, d_2, d_3)=(2,1,1)$, then Equality (\ref{BMY_n+1}) holds, and
if $n=8$, $l=4$, and $(d_1, d_2, d_3,d_4)=(2,1,1,1)$, then Equality (\ref{BMY_n}) holds.

Second, we consider the case where $X$ is a degree $q$ hypersurface of $\mP^{n+1}$.
We assume that $q \ge 2$ and any degree of $D_i$ is one.
Then we have at least 90 examples such that Equality (\ref{BMY_n+1}) or (\ref{BMY_n}) holds by computations using a computer.
For example, 
if $(n, q, l )=(7,2, 3)$, then Equality (\ref{BMY_n+1}) holds, and 
if $(n, q, l )=(8, 2, 4)$, then Equality (\ref{BMY_n}) holds.

\end{document}